\def\titlerunning#1{\gdef\titrun{#1}}
\def\author#1{\gdef\autrun{\def\and{\unskip, }#1}\gdef\@author{#1}}
\def\address#1{{\def\and{\\\hspace*{18pt}}\renewcommand{\thefootnote}{}%
\footnote {#1}}%
\markboth{\autrun}{\titrun}}
\def\email#1{e-mail: #1}
\def\subjclass#1{{\renewcommand{\thefootnote}{}%
\footnote{\emph{Mathematics Subject Classification $(2010)$:} #1}}}
\def\keywords#1{\par\medskip
\noindent\textbf{Keywords.} #1}
\newtheorem{thm}{Theorem}[section]
\newtheorem{lem}[thm]{Lemma}
\newtheorem{proposition}[thm]{Proposition}
\theoremstyle{definition}
\newtheorem{defin}[thm]{Definition}
\newtheorem{rem}[thm]{Remark}
\newtheorem{open}{Open Problem}
\numberwithin{equation}{section}
\def\Plambda{(1.15)_{\lambda}}
\def\Pext{(1.15)_{\lambda^\star}}
\begin{document}


\baselineskip=17pt


\titlerunning{$p$-Laplace equations and geometric Sobolev inequalities}

\title{Regularity of stable solutions of $p$-Laplace equations through geometric
Sobolev type inequalities}

\author{Daniele Castorina
\and
Manel Sanch\'on}

\date{}

\maketitle

\address{D. Castorina: Departament de Matem\`atiques,
Universitat Aut\`onoma de Barcelona,
08193 Bellaterra, Spain; \email{castorina@mat.uab.cat}
\and
M. Sanch\'on: Departament de Matem\`atica Aplicada i An\`alisi,
Universitat de Barcelona,
Gran Via 585, 08007 Barcelona, Spain; \email{msanchon@maia.ub.es}}

\subjclass{Primary
35K57,   
35B65   
; Secondary
35J60 
}


\begin{abstract}
In this paper we prove a
Sobolev and a Morrey type inequality involving the mean
curvature and the tangential gradient with respect to the level sets
of the function that appears in the inequalities. Then, as an
application, we establish \textit{a priori} estimates for semi-stable
solutions of $-\Delta_p u= g(u)$ in a smooth bounded domain
$\Omega\subset \mathbb{R}^n$. In particular, we obtain new
$L^r$ and $W^{1,r}$ bounds for the extremal solution
$u^\star$ when the domain is strictly convex. More precisely, we prove that
$u^\star\in L^\infty(\Omega)$ if $n\leq p+2$ and $u^\star\in
L^{\frac{np}{n-p-2}}(\Omega)\cap W^{1,p}_0(\Omega)$ if $n>p+2$.
\keywords{Geometric inequalities, mean curvature of level sets,
Schwarz symmetri- zation,
$p$-Laplace equations, regularity of stable solutions}
\end{abstract}

\section{Introduction}
The aim of this paper is to obtain \textit{a priori} estimates for
semi-stable solutions of $p$-Laplace equations. We will accomplish this by proving
some geometric type inequalities involving the functionals
\begin{equation}\label{Ipq}
I_{p,q}(v;\Omega):=\left( \int_{\Omega}
\Big(\frac{1}{p'}|\nabla_{T,v} |\nabla v|^{p/q}|\Big)^{q}
+ |H_v|^q |\nabla v|^p \, dx \right)^{1/p},\quad p,q\geq 1
\end{equation}
where $\Omega$ is a smooth bounded domain of $\mathbb{R}^n$ with
$n\geq 2$ and $v\in C_0^\infty(\overline{\Omega})$. Here, and in the
rest of the paper, $H_v (x)$ denotes the mean curvature at $x$ of
the hypersurface $\{y\in\Omega:|v(y)|=|v(x)|\}$ (which is smooth at
points $x\in\Omega$ satisfying $\nabla v(x)\neq 0$), and $\nabla_{T,v}$
is the tangential gradient along a level set of $|v|$. We will prove a Morrey's type
inequality when $n<p+q$ and a Sobolev inequality when $n>p+q$ (see
Theorem~\ref{Theorem:Sobolev} below).

Then, as an application of these inequalities, we establish
$L^r$ and $W^{1,r}$ \textit{a priori} estimates for semi-stable solutions of
the reaction-diffusion problem
\begin{equation}\label{problem}
\left\{
\begin{array}{rcll}
-\Delta_p u &=& g(u)  &\textrm{in } \Omega, \\
 u&>& 0  &\textrm{in } \Omega, \\
 u &=& 0  &\textrm{on } \partial \Omega.
\end{array}
\right.
\end{equation}
Here, the diffusion is modeled
by the $p$-Laplace operator $\Delta_p$
(remember that $\Delta_p u:= {\rm div}(|\nabla u|^{p-2}\nabla u)$)
with $p>1$, while the reaction term is driven by any positive $C^1$
nonlinearity $g$.

As we will see, these estimates will lead to new $L^r$ and $W^{1,r}$ bounds
for the extremal solution $u^\star$ of \eqref{problem} when $g(u)=\lambda f(u)$
and the domain $\Omega$ is strictly convex. More precisely, we prove that
$u^\star\in L^\infty(\Omega)$ if $n\leq p+2$ and $u^\star\in
L^{\frac{np}{n-p-2}}(\Omega)\cap W^{1,p}_0(\Omega)$ if $n>p+2$.

\subsection{Geometric Sobolev inequalities}

Before we establish our Sobolev and Morrey type inequalities we will
state that the functional $I_{p,q}$ defined in \eqref{Ipq} decreases
(up to a universal multiplicative constant) by Schwarz
symmetrization. Given a Lipschitz continuous function $v$ and its
Schwarz symmetrization $v^*$ it is well known that
$$
\int_{B_R} |v^*|^r\ dx=\int_\Omega |v|^r\ dx\quad \textrm{for all }r\in[1,+\infty]
$$
and
$$
\int_{B_R} |\nabla v^*|^r\ dx \leq \int_\Omega |\nabla v|^r\ dx
\quad \textrm{for all }r\in[1,\infty).
$$

Our first result establishes that $I_{p,q}(v^*;B_R)\leq C I_{p,q}(v;\Omega)$ for
some universal constant $C$ depending only on $n$, $p$, and $q$.

\begin{thm}\label{thm:Ipq}
Let $\Omega$ be a smooth bounded domain of $\mathbb{R}^n$ with $n\geq 2$ and
$B_R$ the ball centered at the origin and with radius $R=(|\Omega|/|B_1|)^{1/n}$.
Let $v\in C^\infty_0(\overline{\Omega})$ and $v^*$ its Schwarz symmetrization.
Let $I_{p,q}$ be the functional defined in
\eqref{Ipq} with $p,q \geq 1$. If $n>q+1$ then there exists a universal constant $C$ depending
only on $n$, $p$, and $q$, such that
\begin{equation}\label{comp_integrals}
\left( \int_{B_R}\frac{1}{|x|^q} |\nabla v^*|^p \, dx \right)^{1/p}
=
I_{p,q}(v^*;B_R)
\leq
C I_{p,q}(v;\Omega).
\end{equation}
\end{thm}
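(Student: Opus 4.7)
The plan is to reduce \eqref{comp_integrals} to a pointwise inequality between the coarea-slices of the two $p$-th powers. Since $v^*$ is radially non-increasing, each level set $\{v^*=t\}$ is a sphere, so the tangential gradient of $|\nabla v^*|^{p/q}$ on it vanishes, and with the convention $|H_{v^*}(x)|=1/|x|$ on $\partial B_{|x|}$ the stated identity $I_{p,q}(v^*;B_R)^p=\int_{B_R}|x|^{-q}|\nabla v^*|^p\,dx$ follows by direct computation. Set $\mu(t):=|\{|v|>t\}|$, $r(t):=(\mu(t)/\omega_n)^{1/n}$, and $\Sigma_t:=\{|v|=t\}$ (a smooth closed hypersurface for a.e.\ $t$, by Sard). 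The coarea formula then yields $I_{p,q}(v;\Omega)^p=\int_0^\infty\Phi(t)\,dt$ with
\[
\Phi(t)=\int_{\Sigma_t}\left[\frac{|\nabla_T|\nabla v|^{p/q}|^q}{(p')^q\,|\nabla v|}+|H|^q|\nabla v|^{p-1}\right]d\mathcal{H}^{n-1},
\]
and $I_{p,q}(v^*;B_R)^p=\int_0^\infty\Phi^*(t)\,dt$ with $\Phi^*(t)=(n\omega_n)^p\,r(t)^{p(n-1)-q}/(-\mu'(t))^{p-1}$, using $-\mu'(t)=\int_{\Sigma_t}|\nabla v|^{-1}\,d\mathcal{H}^{n-1}$.

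\textbf{Key algebraic observation.} With the choice $f:=|\nabla v|^{(p-1)/q}$, a short direct computation shows
\[
|\nabla_T f|^q=\left(\frac{p-1}{q}\right)^{\!q}|\nabla v|^{p-1-q}|\nabla_T|\nabla v||^q=\frac{|\nabla_T|\nabla v|^{p/q}|^q}{(p')^q\,|\nabla v|},
\]
and $|H|^q f^q=|H|^q|\nabla v|^{p-1}$, so that $\Phi(t)=\int_{\Sigma_t}\bigl(|\nabla_T f|^q+|H|^q f^q\bigr)d\mathcal{H}^{n-1}$ \emph{exactly}, not just up to a constant. The Michael--Simon Sobolev inequality on the closed hypersurface $\Sigma_t\subset\mathbb{R}^n$, which is valid for $1\le q<n-1$ (this is precisely the hypothesis $n>q+1$), applied to this $f$ gives
\[
\left(\int_{\Sigma_t}|\nabla v|^{\alpha}\,d\mathcal{H}^{n-1}\right)^{\!\!\frac{n-1-q}{n-1}}\le C(n,p,q)\,\Phi(t),\qquad \alpha:=\frac{(p-1)(n-1)}{n-1-q}.
\]

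\textbf{Closing step and main obstacle.} To connect this with $\Phi^*(t)$, Hölder's inequality applied to $|\Sigma_t|=\int_{\Sigma_t}|\nabla v|^{\alpha/(\alpha+1)}\cdot|\nabla v|^{-\alpha/(\alpha+1)}\,d\mathcal{H}^{n-1}$ gives $|\Sigma_t|^{\alpha+1}\le\bigl(\int_{\Sigma_t}|\nabla v|^\alpha\bigr)\bigl(\int_{\Sigma_t}|\nabla v|^{-1}\bigr)^\alpha$; combining with the classical isoperimetric inequality $|\Sigma_t|\ge n\omega_n\,r(t)^{n-1}$ and the exponent identities $\tfrac{n-1-q}{n-1}(\alpha+1)=p-\tfrac{q}{n-1}$ and $\tfrac{n-1-q}{n-1}\alpha=p-1$, one arrives at $\Phi^*(t)\le C(n,p,q)\,\Phi(t)$ for a.e.\ $t$. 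Integrating in $t$ yields \eqref{comp_integrals}. The principal difficulty is the algebraic matching in the Key Step: one needs a single Michael--Simon test function $f$ to reproduce \emph{both} summands of $\Phi(t)$ at once, and the fact that $f=|\nabla v|^{(p-1)/q}$ does so \emph{exactly} (with the correct coefficient $1/(p')^q$ coming from $((p-1)/q)^q$) is what allows the constant in \eqref{comp_integrals} to be universal; the rest of the argument is a standard Talenti-type coarea-plus-isoperimetric comparison.
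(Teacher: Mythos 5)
Your proposal is correct and follows essentially the same route as the paper: apply the Michael--Simon inequality on each regular level set $\{|v|=t\}$ with the test function $|\nabla v|^{(p-1)/q}$ (whose exact reproduction of both terms of the integrand is also the paper's implicit key identity), compare with the isoperimetric inequality and a Jensen/H\"older step to the symmetrized slice, and integrate via the coarea formula. Your H\"older formulation of the slice comparison is just an equivalent rewriting of the paper's Jensen-inequality step, and it even recovers the same explicit constant $A^{q/p}|\partial B_1|^{q/((n-1)p)}$.
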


Note that the Schwarz symmetrization of $v$ is a radial function,
and hence, its level sets are spheres. In particular, the mean
curvature $H_{v^*}(x)=1/|x|$ and the tangential gradient
$\nabla_{T,{v^*}} |\nabla v^*|^{p/q}=0$. This explains the equality
in \eqref{comp_integrals}.

A related result was proved by Trudinger \cite{Trudinger97} when
$q=1$ for the class of mean convex functions (\textit{i.e.}, functions
for which the mean curvature of the level sets is nonnegative). More precisely,
he proved Theorem~\ref{thm:Ipq} replacing the functional $I_{p,q}$ by
\begin{equation}\label{tilde:Ipq}
\tilde{I}_{p,q}(v;\Omega):=\left( \int_{\Omega}
|H_v|^q |\nabla v|^p \, dx \right)^{1/p}
\end{equation}
and considering the Schwarz symmetrization of $v$ with respect to the
perimeter instead of the classical one like us (see Definition~\ref{Schwarz-symm}
below). In order to define
this symmetrization (with respect to the perimeter) it is essential
to know that the mean curvature $H_v$ of the level sets of $|v|$ is
nonnegative. Then using an Aleksandrov-Fenchel inequality for mean
convex hypersurfaces (see \cite{Trudinger94}) he proved Theorem~\ref{thm:Ipq}
for this class of functions when $q=1$.

We prove Theorem~\ref{thm:Ipq} using two ingredients. The first one
is the classical isoperimetric inequality:
\begin{equation}\label{isop:ineq}
n|B_1|^{1/n}|D|^{(n-1)/n}\leq |\partial D|
\end{equation}
for any smooth bounded domain $D$ of $\mathbb{R}^n$. The second one is
a geometric Sobolev inequality, due to Michael and Simon \cite{MS}
and to Allard \cite{A}, on compact $(n-1)$-hypersurfaces $M$ without
boundary which involves the mean curvature $H$ of $M$:
for every $q\in  [1, n-1)$,
there exists a constant $A$ depending only on $n$ and $q$ such that
\begin{equation}\label{Sob:mean}
\left( \int_M |\phi|^{q^\star} d\sigma \right)^{1/q^\star} \leq
A
\left( \int_M |\nabla \phi|^q +  |H\phi|^q \ d\sigma \right)^{1/q}
\end{equation}
for every $\phi\in C^\infty(M)$, where $q^\star = (n-1)q/(n-1-q)$ and
$d\sigma$ denotes the area element in $M$.
Using the classical isoperimetric inequality \eqref{isop:ineq} and
the geometric Sobolev inequality \eqref{Sob:mean} with $M=\{x\in \Omega:|v(x)|=t\}$
and $\phi=|\nabla v|^{(p-1)/q}$ we will prove Theorem~\ref{thm:Ipq}
with the explicit constant $C=A^\frac{q}{p} |\partial B_1|^\frac{q}{(n-1)p}$, being $A$ the
universal constant in \eqref{Sob:mean}.

{F}rom Theorem~\ref{thm:Ipq} and well known 1-dimensional weighted
Sobolev inequalities it is easy to prove Morrey and Sobolev geometric
inequalities involving the functional $I_{p,q}$. Indeed, by
Theorem~\ref{thm:Ipq} and since Schwarz symmetrization preserves the
$L^r$ norm, it is sufficient to prove the existence of a positive
constant $\overline{C}$ independent of $v^*$ such that
$$
\|v^*\|_{L^r(B_R)}\leq \overline{C} I_{p,q}(v^*;B_R).
$$

Using this argument we prove the following geometric inequalities.

\begin{thm}\label{Theorem:Sobolev}
Let $\Omega$ be a smooth bounded domain of $\mathbb{R}^n$ with $n\geq 2$ and
$v\in C^\infty_0(\overline{\Omega})$.
Let $I_{p,q}$ be the functional defined in \eqref{Ipq} with $p,q \geq 1$ and
$$
p_{q}^\star:= \frac{n p}{n - (p+q)}.
$$
Assume $n>q+1$. The following assertions hold:
\begin{enumerate}
\item[$(a)$] If $n<p+q$ then
\begin{equation}\label{Morrey}
\|v\|_{L^\infty(\Omega)}
\leq C_1|\Omega|^{\frac{p+q-n}{np}}I_{p,q}(v;\Omega)
\end{equation}
for some constant $C_1$ depending only on $n$, $p$, and $q$.

\item[$(b)$] If $n>p+q$, then
\begin{equation}\label{Sobolev}
\|v\|_{L^r(\Omega)}
\leq C_2|\Omega|^{\frac{1}{r}-\frac{1}{p_q^\star}}
I_{p,q}(v;\Omega)\quad \textrm{for every }1\leq r \leq p_{q}^\star,
\end{equation}
where $C_2$ is a constant depending only on
$n$, $p$, $q$, and $r$.

\item[$(c)$] If $n = p+q$, then
\begin{equation}\label{Moser-Trudinger}
\int_\Omega\exp\left\{\left(\frac{|v|}{C_3 I_{p,q}(v;\Omega)}\right)^{p'}\right\}\ dx
\leq \frac{n}{n-1}|\Omega|,\quad \textrm{where }p'=p/(p-1),
\end{equation}
for some positive constant $C_3$ depending only on $n$ and $p$.
\end{enumerate}
\end{thm}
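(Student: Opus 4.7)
The overall plan is to reduce all three inequalities to one-dimensional weighted inequalities via Theorem~\ref{thm:Ipq}. Since Schwarz symmetrization preserves $L^r$ norms for $1\le r\le\infty$, and $I_{p,q}(v^*;B_R)\le C\,I_{p,q}(v;\Omega)$ by Theorem~\ref{thm:Ipq}, it suffices to prove each inequality for the symmetrized function $v^*(x)=\tilde v(|x|)$, which satisfies $\tilde v(R)=0$ with $R=(|\Omega|/|B_1|)^{1/n}$. In polar coordinates
$$I_{p,q}(v^*;B_R)^p=|\partial B_1|\int_0^R |\tilde v'(r)|^p\,r^{n-1-q}\,dr,$$
so the task is to bound the appropriate norm of $\tilde v$ (the weighted $L^r$ norm with weight $r^{n-1}$, the sup norm, or an exponential integral) by $\|\tilde v'\|_{L^p(r^{n-1-q}\,dr)}$ under $\tilde v(R)=0$.

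For (a), I would start from $\tilde v(r)=-\int_r^R\tilde v'(s)\,ds$ and apply H\"older to the factorization $\bigl(|\tilde v'(s)|\,s^{(n-1-q)/p}\bigr)\cdot s^{-(n-1-q)/p}$. The $L^{p'}$ norm of the second factor on $(0,R)$ is finite precisely when $(n-1-q)/(p-1)<1$, i.e.\ $n<p+q$, and equals a dimensional constant times $R^{(p+q-n)/p}$; letting $r\to 0$ and converting $R$ to $|\Omega|^{1/n}$ yields \eqref{Morrey} with the correct volume exponent. For (b), I would invoke the classical Caffarelli--Kohn--Nirenberg / Bliss weighted Hardy--Sobolev inequality
$$\left(\int_0^R|\tilde v(r)|^{p_q^\star}\,r^{n-1}\,dr\right)^{1/p_q^\star}\!\!\le c(n,p,q)\left(\int_0^R|\tilde v'(r)|^p\,r^{n-1-q}\,dr\right)^{1/p}\!,\ \tilde v(R)=0,$$
whose exponent $p_q^\star=np/(n-(p+q))$ is forced by scaling; this gives \eqref{Sobolev} at $r=p_q^\star$, and H\"older's inequality on $\Omega$ extends it to $1\le r<p_q^\star$ producing exactly the factor $|\Omega|^{1/r-1/p_q^\star}$.

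For (c), at the critical threshold $n=p+q$ the H\"older argument of (a) degenerates to the logarithmic bound $|\tilde v(r)|\le \|\tilde v'\|_{L^p(s^{p-1}\,ds)}\,(\log(R/r))^{1/p'}$. Substituting this pointwise into the exponential integral and using $\|\tilde v'\|_{L^p(s^{p-1}\,ds)}\le C\,|\partial B_1|^{-1/p}\,I_{p,q}(v;\Omega)$ from Theorem~\ref{thm:Ipq}, the integrand is dominated by $(R/r)^{\beta}$ with $\beta=(C/(C_3|\partial B_1|^{1/p}))^{p'}$; choosing $C_3$ so that $\beta=1$ (which is strictly below $n$ since $n\ge 2$) evaluates the resulting integral as $|\partial B_1|\,R^n/(n-1)=n|\Omega|/(n-1)$, giving \eqref{Moser-Trudinger}. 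I expect this last case to be the main technical obstacle: one must track constants carefully through both the pointwise logarithmic bound and the exchange from $I_{p,q}(v^*;B_R)$ to $I_{p,q}(v;\Omega)$ to land exactly on the exponent $p'$ and the prefactor $n/(n-1)$, whereas (a) and (b) are essentially routine once the corresponding 1D weighted inequalities are in place.
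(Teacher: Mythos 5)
Your proposal is correct and follows essentially the same route as the paper: reduction to the symmetrized function via Theorem~\ref{thm:Ipq}, the H\"older/pointwise argument for $(a)$, the one-dimensional weighted Hardy--Sobolev inequality (the paper cites Trudinger's version with optimal constant) plus H\"older for $(b)$, and the logarithmic bound with the normalization making the exponential integrand $R/s$ for $(c)$. No gaps; even the final evaluation $|\partial B_1|R^n/(n-1)=\frac{n}{n-1}|\Omega|$ matches the paper's computation.
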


Cabr\'e and the second author \cite{CS} proved recently
Theorem~\ref{Theorem:Sobolev} under the assumption $q\geq p$ using a
different method (without the use of Schwarz symmetrization). More
precisely, they proved the theorem replacing the functional
$I_{p,q}(v;\Omega)$ by the one defined in \eqref{tilde:Ipq},
$\tilde{I}_{p,q}(v;\Omega)$. Therefore, our
geometric inequalities are only new in the range $1\leq q<p$.

\begin{open}
Is Theorem~\ref{Theorem:Sobolev} true for the range $1\leq q<p$ and
replacing the functional $I_{p,q}(v;\Omega)$ by the one defined in
\eqref{tilde:Ipq}, $\tilde{I}_{p,q}(v;\Omega)$?
\end{open}

This question has a posive answer for the class of mean convex functions.
Trudinger \cite{Trudinger97} proved this result for this class of functions
when $q=1$ and can be easily extended for every $q\geq 1$. However, to our
knowledge, for general functions (without mean convex level sets) it is an
open problem.

\subsection{Regularity of semi-stable solutions}

The second part of the paper deals with \textit{a priori} estimates for semi-stable
solutions of problem \eqref{problem}. Remember that a regular solution $u\in
C_0^1(\overline{\Omega})$ of \eqref{problem} is said to be \textit{semi-stable}
if the second variation of the associated energy functional at $u$ is nonnegative
definite, \textit{i.e.},
\begin{equation}\label{semi-stab1}
\int_\Omega |\nabla u|^{p-2} \left\{|\nabla \phi|^2+(p-2)
\left(\nabla \phi\cdot\frac{\nabla u}{|\nabla u|}\right)^2\right\} - g'(u) \phi^2\ dx \geq 0
\end{equation}
for every $\phi \in H_0$, where $H_0$ denotes the space of admissible functions
(see Definition~\ref{H0} below). The class of semi-stable solutions
includes local minimizers of the energy functional as well as minimal
and extremal solutions of \eqref{problem} when $g(u)=\lambda f(u)$.

Using an appropriate test function in \eqref{semi-stab1} we prove
the following \textit{a priori} estimates for semi-stable solutions.
This result extends the ones in \cite{Cabre09} and \cite{CS} for the
Laplacian case ($p=2$) due to Cabr\'e and the second author.
\begin{thm}\label{Theorem}
Let $g$ be any $C^\infty$ function and $\Omega\subset\mathbb{R}^n$ any smooth
bounded domain. Let $u\in C^1_0(\overline{\Omega})$ be a semi-stable
solution of \eqref{problem}, \textit{i.e.}, a solution satisfying \eqref{semi-stab1}.
The following assertions hold:

$(a)$ If $n\leq p+2$ then there exists a constant $C$ depending
only on $n$ and $p$ such that
\begin{equation}\label{L-infinty}
\|u\|_{L^\infty(\Omega)}\leq s+\frac{C}{s^{2/p}}|\Omega|^\frac{p+2-n}{np}
\left(\int_{\{u\leq s\}}  |\nabla u|^{p+2}\, dx\right)^{1/p}\quad \textrm{for
all }s>0.
\end{equation}

$(b)$ If $n>p+2$ then there exists a constant $C$ depending
only on $n$ and $p$ such that
\begin{equation}\label{Lq:estimate}
\left(\int_{\{u>s\}} \Big(|u|-s\Big)^{\frac{np}{n-(p+2)}}\ dx\right)^{\frac{n-(p+2)}{np}}
\leq \frac{C}{s^{2/p}}
\left(\int_{\{u\leq s\}} |\nabla u|^{p+2} \ dx\right)^{1/p}
\end{equation}
for all $s>0$. Moreover, there exists a constant $C$ depending
only on $n$, $p$, and $r$ such that
\begin{equation}\label{grad:estimate}
\int_\Omega |\nabla u|^r\ dx\leq C\left(|\Omega|+\int_\Omega|u|^\frac{np}{n-(p+2)}\ dx
+\|g(u)\|_{L^1(\Omega)}\right)
\end{equation}
for all $1\leq r<r_1:=\frac{np^2}{(1+p)n-p-2}$.
\end{thm}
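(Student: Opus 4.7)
The strategy is to reduce both (a) and (b) to a single ``geometric semi-stability'' bound on the truncation $v:=(u-s)_+$, and then to feed it into Theorem~\ref{Theorem:Sobolev} with $q=2$. Since $\nabla v=\nabla u\,\chi_{\{u>s\}}$ and the smooth level sets of $v$ in $\{u>s\}$ coincide with those of $u$, one has
\begin{equation*}
I_{p,2}((u-s)_+;\Omega)^p=\int_{\{u>s\}}\bigg(\frac{1}{p'}|\nabla_{T,u}|\nabla u|^{p/2}|\bigg)^2+H_u^2|\nabla u|^p\,dx.
\end{equation*}
Applying the Morrey inequality \eqref{Morrey} to $v$ when $n<p+2$ (together with $\|u\|_{L^\infty}\le s+\|(u-s)_+\|_{L^\infty}$) yields \eqref{L-infinty}; applying the Sobolev inequality \eqref{Sobolev} with $r=p_2^\star$ to $v$ when $n>p+2$ yields \eqref{Lq:estimate}; the borderline case $n=p+2$ is reached by combining \eqref{Moser-Trudinger} with a bootstrap. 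In all cases the statement is reduced to the single estimate
\begin{equation*}
I_{p,2}((u-s)_+;\Omega)^p\;\le\;\frac{C^p}{s^2}\int_{\{u\le s\}}|\nabla u|^{p+2}\,dx. \qquad(\star)
\end{equation*}

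To prove $(\star)$ I would test \eqref{semi-stab1} against the admissible Lipschitz function $\phi=c(u)$, with $c(t)=t/s$ for $t\le s$ and $c(t)=1$ for $t\ge s$. Since $\nabla\phi=c'(u)\nabla u$ is parallel to $\nu=\nabla u/|\nabla u|$, one has $|\nabla\phi|^2+(p-2)(\nabla\phi\cdot\nu)^2=(p-1)c'(u)^2|\nabla u|^2$, and \eqref{semi-stab1} reduces to
\begin{equation*}
\frac{p-1}{s^2}\int_{\{u\le s\}}|\nabla u|^p\,dx\;\ge\;\int_\Omega g'(u)\,c(u)^2\,dx.
\end{equation*}
The key step is then to trade the $g'(u)$ term for the geometric quantities appearing in $I_{p,2}^p$: in the spirit of Sternberg--Zumbrun and Farina--Sciunzi--Valdinoci, I would differentiate the equation $-\Delta_p u=g(u)$ componentwise, test the resulting linearized equation for $u_i$ against $u_i\,c(u)^2$ and sum over $i$, integrate by parts, and decompose on each regular level set of $u$ the arising quadratic form in $D^2u$ into a purely normal contribution, a tangential-gradient contribution proportional to $|\nabla_{T,u}|\nabla u|^{p/2}|^2$, and a second-fundamental-form contribution bounded below by $(n-1)H_u^2|\nabla u|^p$ via Cauchy--Schwarz. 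Combining this identity with the stability reformulation above cancels the $g'(u)$ contributions and produces the Poincar\'e-type inequality
\begin{equation*}
\int_\Omega c(u)^2\bigg\{\frac{1}{(p')^2}|\nabla_{T,u}|\nabla u|^{p/2}|^2+H_u^2|\nabla u|^p\bigg\}\,dx\le C\int_\Omega c'(u)^2|\nabla u|^{p+2}\,dx,
\end{equation*}
which for the specific $c$ above is precisely $(\star)$.

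The main obstacle is to carry out this differentiation rigorously for the $p$-Laplacian: one has to handle simultaneously the anisotropic second variation (the $(p-2)(\nabla\phi\cdot\nu)^2$ term) and the critical set $\{\nabla u=0\}$, where both the linearized operator degenerates and the level-set geometry is not defined. Since semi-stable solutions are only $C^{1,\alpha}$, the whole argument must be performed locally on $\{\nabla u\neq 0\}$ through cutoffs concentrating away from the critical set, followed by a limit procedure based on the regularity theory of Damascelli--Sciunzi for $p$-Laplace equations. Finally, \eqref{grad:estimate} follows from the $L^{p_2^\star}$ bound just established and the equation $-\Delta_p u=g(u)\in L^1(\Omega)$ by a standard Boccardo--Gallou\"et argument: testing the equation against Stampacchia-type functions $[1-(1+u)^{-\gamma}]/\gamma$ for a suitable $\gamma>0$ and applying H\"older's inequality produces the bound $\int|\nabla u|^r\,dx\le C(|\Omega|+\|u\|_{L^{p_2^\star}}^{p_2^\star}+\|g(u)\|_{L^1})$ for every $1\le r<r_1$, the exponent $r_1=np^2/((1+p)n-p-2)$ arising as the natural interpolation between the $W^{1,p}_0$ energy-space regularity and the Boccardo--Gallou\"et threshold for $L^1$ data.
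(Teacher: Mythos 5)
Your main line of attack for $n<p+2$, $n>p+2$, and the gradient bound coincides with the paper's: the key estimate $(\star)$ is exactly the paper's inequality \eqref{ineq:key}/\eqref{StZu2} with $\eta=T_su/s$, obtained from the semi-stability condition tested with $\phi=|\nabla u|\eta$ (the computation you sketch is the Sternberg--Zumbrun/Farina--Sciunzi--Valdinoci identity that the paper cites from \cite{FSV}, with the critical set handled through \cite{DS,CLS} as in Proposition~\ref{Prop:2}), and then Theorem~\ref{Theorem:Sobolev} with $q=2$ applied to $(u-s)_+$ on $\{u>s\}$. Two remarks on this part. First, a slip: as written you test \eqref{semi-stab1} with $\phi=c(u)$, which controls $\int g'(u)c(u)^2\,dx$, whereas the identity obtained by differentiating the equation and testing with $u_ic(u)^2$ produces $\int g'(u)|\nabla u|^2c(u)^2\,dx$; these do not cancel. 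The correct choice is $\phi=|\nabla u|\,c(u)$ in the stability inequality, which is precisely what leads to \eqref{StZu2}. Second, your proof of \eqref{grad:estimate} via the weight $[1-(1+u)^{-\gamma}]/\gamma$ and H\"older is a legitimate variant of the paper's Proposition~\ref{Prop:bootstrap} (a distribution-function argument in the spirit of \cite{BBGGPV95}); with $r_0=np/(n-p-2)$ both give exactly the threshold $r_1=pr_0/(r_0+1)=np^2/((1+p)n-p-2)$, so this part is fine.

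The genuine gap is the critical case $n=p+2$, which is part of assertion $(a)$. You propose to "combine \eqref{Moser-Trudinger} with a bootstrap", but the Moser--Trudinger bound only gives exponential integrability of $(u-s)_+$, and exponential integrability does not imply an $L^\infty$ bound; moreover, since $g$ is an arbitrary positive $C^\infty$ nonlinearity with no growth restriction, there is no elliptic bootstrap available from $\exp(|u|^{p'})\in L^1$ through the equation ($g(u)$ need not lie in any useful $L^q$ space). The paper's treatment of this case is genuinely different and more delicate: one keeps a general $\eta=\eta(u)$ in \eqref{StZu2}, applies the Michael--Simon/Allard inequality \eqref{MS} with $q=2$ and $\phi=|\nabla u|^{(p-1)/2}$ directly on each level set $\{u=t\}$ (using that $(p-1)\tfrac{n-1}{n-3}=p+1$ exactly when $n=p+2$), reduces to a one-dimensional inequality in $t$ for $\psi(t)=\int_{\{u=t\}}|\nabla u|^{p+1}\,d\sigma$, and then chooses the exponential test function $\eta_s$ of \eqref{etaa} so that half of the right-hand side is absorbed; a final H\"older argument in $t$ with exponents $m=n-2$, $\alpha=2/(n-2)$, $\beta=(n-3)/((n-2)(n-1))$ yields $\|u\|_{L^\infty}-s\le Cs^{-2/p}\bigl(\int_{\{u\le s\}}|\nabla u|^{p+2}\,dx\bigr)^{1/p}$, i.e.\ \eqref{L-infinty}. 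Without an argument of this type (or some substitute), your proof does not cover $n=p+2$.
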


To prove \eqref{L-infinty} and \eqref{Lq:estimate} we use the semi-stability condition
\eqref{semi-stab1} with the test function $\phi=|\nabla u|\eta$ to obtain
\begin{equation}\label{ineq:key}
\int_{\Omega}\left( \frac{4}{p^2}|\nabla_{T,u} |\nabla u|^{p/2}|^{2}
+ \frac{n-1}{p-1}H_u^2 |\nabla u|^{p} \right) \eta^2 \, dx
\leq \int_{\Omega}  |\nabla u|^{p}|\nabla \eta|^2\, dx
\end{equation}
for every Lipschitz function $\eta$ in $\overline{\Omega}$ with
$\eta|_{\partial\Omega}=0$. Then, taking $\eta=T_s u = \min\{s,u\}$,
we obtain \eqref{L-infinty} and \eqref{Lq:estimate} when $n\neq p+2$
by using the Morrey and Sobolev inequalities established in
Theorem~\ref{Theorem:Sobolev} with $q=2$.
The critical case $n=p+2$ is more involved. In order to get
\eqref{L-infinty} in this case, we take another
explicit test function $\eta=\eta(u)$ in \eqref{ineq:key} and use the geometric
Sobolev inequality \eqref{Sob:mean}. The
gradient estimate established in \eqref{grad:estimate} will follow by
using a technique introduced by B\'enilan \textit{et al.}
\cite{BBGGPV95} to get the regularity of entropy solutions for
$p$-Laplace equations with $L^1$ data (see Proposition
\ref{Prop:bootstrap}).

The rest of the introduction deals with the regularity of extremal solutions.
Let us recall the problem and some known results in this topic. Consider
\stepcounter{equation}
$$
\left\{
\begin{array}{rcll}
-\Delta_p u&=&\lambda f(u)&\textrm{in }\Omega,\\
u&=&0&\textrm{on }\partial \Omega,
\end{array}
\right. \eqno{(1.15)_{\lambda}}
$$
where $\lambda$ is a positive parameter and $f$ is a $C^1$ positive
increasing function satisfying
\begin{equation}\label{p-superlinear}
\lim_{t\rightarrow+\infty}\frac{f(t)}{t^{p-1}}=+\infty.
\end{equation}

Cabr\'e and the second author \cite{CS07} proved the existence of
an extremal parameter $\lambda^\star\in(0,\infty)$ such that problem
$\Plambda$ admits a minimal regular solution $u_\lambda\in C^1_0(\overline{\Omega})$
for $\lambda\in(0,\lambda^\star)$ and admits no regular solution
for $\lambda>\lambda^\star$.
Moreover, every minimal solution $u_\lambda$ is a semi-stable for $\lambda\in
(0,\lambda^\star)$.

For the Laplacian case ($p=2$), the limit of minimal solutions
$$
u^\star:=\lim_{\lambda\uparrow\lambda^\star}u_\lambda
$$
is a weak solution of the extremal problem $\Pext$ and it is known as
extremal solution. Nedev \cite{Nedev} proved, in the case of convex
nonlinearities, that $u^\star\in L^\infty(\Omega)$ if $n\leq 3$ and
$u^\star\in L^r(\Omega)$ for all $1\leq r<n/(n-4)$ if $n\geq 4$.
Recently, Cabr\'e~\cite{Cabre09}, Cabr\'e and the second author \cite{CS}, 
and Nedev \cite{Nedev01} proved,
in the case of convex domains and general nonlinearities, that $u^\star\in L^\infty(\Omega)$
if $n\leq 4$ and $u^\star\in L^{\frac{2n}{n-4}}(\Omega)\cap H^1_0(\Omega)$
if $n\geq 5$.

For arbitrary $p>1$ it is unknown if the limit of minimal solutions
$u^\star$ is a (weak or entropy) solution of $\Pext$. In the affirmative
case, it is called the \textit{extremal solution of $\Pext$}.
However, in \cite{S} it is proved  that the limit of minimal solutions $u^\star$ is
a weak solution (in the distributional sense) of $\Pext$ whenever $p\geq 2$ and
$f$ satisfies the additional condition:
\begin{equation}\label{convex:assump}
\textrm{there exists }T\geq0 \textrm{ such that }(f(t)-f(0))^{1/(p-1)}
\textrm{ is convex for all }t\geq T.
\end{equation}
Moreover,
$$
u^\star\in L^\infty(\Omega)\qquad\textrm{ if }n<p+p'
$$
and
$$
u^\star\in L^r(\Omega),\textrm{ for all }r<\tilde{r}_0:=(p-1)\frac{n}{n-(p+p')},\quad
\textrm{if }n\geq p+p'.
$$
This extends previous results of Nedev \cite{Nedev} for the Laplacian case ($p=2$)
and convex nonlinearities.

Our next result improves the $L^q$ estimate in \cite{Nedev,S} for strictly convex domains. We also
prove that $u^\star$ belongs to the energy class $W^{1,p}_0(\Omega)$ independently
of the dimension extending an unpublished result of Nedev~\cite{Nedev01} for $p=2$ to every
$p\geq 2$ (see also \cite{CS}).
\begin{thm}\label{Theorem2}
Let $f$ be an increasing positive $C^1$ function satisfying
\eqref{p-superlinear}. Assume that $\Omega$ is a  smooth strictly convex
domain of $\mathbb{R}^n$. Let $u_\lambda\in
C^1_0(\overline{\Omega})$ be the minimal solution of $\Plambda$.
There exists a constant $C$ independent of $\lambda$ such that:
\begin{enumerate}
\item[$(a)$] If $n\leq p+2$ then 
$
\|u_\lambda\|_{L^\infty(\Omega)}\leq C \|f(u_\lambda)\|_{L^1(\Omega)}^{1/(p-1)}.
$
\item[$(b)$] If $n> p+2$ then
$
\|u_\lambda\|_{L^\frac{np}{n-p-2}(\Omega)}\leq C
\|f(u_\lambda)\|_{L^1(\Omega)}^{1/(p-1)}.
$ 
Moreover
$
\|u_\lambda\|_{W^{1,p}_0(\Omega)}\leq C'
$ 
where $C'$ is a constant depending only on $n$, $p$, $\Omega$, $f$ and 
$\|f(u_\lambda)\|_{L^1(\Omega)}$.
\end{enumerate}

Assume, in addition, $p\geq 2$ and that \eqref{convex:assump} holds. Then
\begin{enumerate}
\item[$(i)$] If $n\leq p+2$ then $u^\star\in L^\infty(\Omega)$. In particular, $u^\star
\in C^1_0(\overline{\Omega})$.
\item[$(ii)$] If $n>p+2$ then $u^\star\in L^\frac{np}{n-p-2}(\Omega)\cap W^{1,p}_0(\Omega)$.
\end{enumerate}
\end{thm}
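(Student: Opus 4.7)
By~\cite{CS07} every minimal solution $u_\lambda$ of $\Plambda$ is semi-stable, so Theorem~\ref{Theorem} applies with $u=u_\lambda$ and $g=\lambda f$. The $s$-dependent estimates~\eqref{L-infinty} and~\eqref{Lq:estimate} involve only $s$ and the truncated gradient integral $J(s):=\int_{\{u_\lambda\leq s\}}|\nabla u_\lambda|^{p+2}\,dx$. My plan is to close them into bounds involving $\|f(u_\lambda)\|_{L^1(\Omega)}$ by proving an auxiliary estimate of the form $J(s)\leq C_\Omega\, s^{\beta}\,\|f(u_\lambda)\|_{L^1(\Omega)}$, valid for $s$ in a range depending only on $\Omega$, and then optimizing in $s$ with a choice of order $\|f(u_\lambda)\|_{L^1}^{1/(p-1)}$.

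The ingredient that supplies this auxiliary bound is the strict convexity of $\Omega$. Via the $p$-Laplacian moving-planes method (Damascelli--Pacella--Ramaswamy), strict convexity produces a boundary tube $\Omega_{\delta_0}=\{\mathrm{dist}(\cdot,\partial\Omega)<\delta_0\}$ with $\delta_0$ depending only on $\Omega$, on which $u_\lambda$ has no critical point and on which $|\nabla u_\lambda(x)|$ is comparable, uniformly in $\lambda$, to the inward normal derivative $u_\nu(y)$ at the foot $y\in\partial\Omega$ of the inward normal through $x$. Combining this pointwise comparability with the coarea formula applied on the level sets $\{u_\lambda=t\}$, $t\in(0,s)$, inside $\Omega_{\delta_0}$, and with the identity $\int_{\partial\Omega}u_\nu^{p-1}\,d\sigma=\lambda\|f(u_\lambda)\|_{L^1(\Omega)}$ obtained by integrating the equation, gives the auxiliary bound on $J(s)$ whenever $\{u_\lambda\leq s\}\subset\Omega_{\delta_0}$; the complementary range of $s$ can be absorbed into the constant, since the target inequalities are trivial when $\|u_\lambda\|_{L^\infty}$ is already comparable to $\|f(u_\lambda)\|_{L^1}^{1/(p-1)}$.

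Plugging the auxiliary bound into~\eqref{L-infinty} (case $n\leq p+2$) or~\eqref{Lq:estimate} (case $n>p+2$) with the chosen $s$ delivers (a) and the $L^{np/(n-p-2)}$ part of (b). The $W^{1,p}_0$ estimate in (b) then follows by testing the equation against $u_\lambda$: $\|u_\lambda\|_{W^{1,p}_0}^p=\lambda\int_\Omega u_\lambda f(u_\lambda)\,dx$, and the right-hand side is controlled via H\"older from the just-proved $L^{np/(n-p-2)}$ bound on $u_\lambda$, the $L^1$ bound on $f(u_\lambda)$, and an interpolation that depends on the growth of $f$ (this is why $C'$ is allowed to depend on $f$). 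For (i)--(ii), I would invoke~\cite{S}: under $p\geq 2$ and~\eqref{convex:assump} the limit $u^\star$ is a weak solution of $\Pext$ and $\sup_{\lambda<\lambda^\star}\|f(u_\lambda)\|_{L^1(\Omega)}<\infty$; the estimates of (a) and (b), being uniform in $\lambda$, pass to the limit (Fatou for the Lebesgue norms, weak-$W^{1,p}_0$ compactness for the gradient), yielding the stated integrability of $u^\star$. When $n\leq p+2$, the $L^\infty$ bound on $u^\star$ combined with $C^{1,\alpha}$ regularity for the $p$-Laplacian (DiBenedetto--Tolksdorf--Lieberman) upgrades $u^\star$ to $C^1_0(\overline{\Omega})$.

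The main obstacle is the auxiliary bound on $J(s)$: one has to translate the moving-planes comparability $|\nabla u_\lambda|\asymp u_\nu$ and the $L^{p-1}(\partial\Omega)$ control of $u_\nu$ (from the integrated equation) into the higher $L^{p+1}$-type control of $u_\nu$ implicit in the exponent $p+2$, pinning down $\beta$ so that optimization in $s$ reproduces exactly the scaling $\|f(u_\lambda)\|_{L^1}^{1/(p-1)}$. This is the only step where the $p$-Laplacian setting genuinely departs from the Laplacian arguments of~\cite{Cabre09,CS}; once it is in place, everything else is standard manipulation of the estimates already available.
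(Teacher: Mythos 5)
The central step of your plan --- the auxiliary bound $J(s)=\int_{\{u_\lambda\le s\}}|\nabla u_\lambda|^{p+2}\,dx\le C_\Omega\,s^\beta\|f(u_\lambda)\|_{L^1(\Omega)}$ --- is exactly where the argument breaks, and the paper is organized precisely so as never to need it. Integrating the equation only gives $\int_{\partial\Omega}|\nabla u_\lambda|^{p-1}\,d\sigma=\lambda\|f(u_\lambda)\|_{L^1(\Omega)}$, i.e.\ $L^{p-1}$ control of the normal derivative, whereas the exponent $p+2$ asks for $L^{p+1}$-type control on the level sets; bridging that gap requires an $L^\infty$ bound on $|\nabla u_\lambda|$ near $\partial\Omega$, which (via Lieberman \cite{Lie}) depends on $f\big(C\|f(u_\lambda)\|_{L^1}^{1/(p-1)}\big)$ and hence destroys the clean scaling $\|f(u_\lambda)\|_{L^1}^{1/(p-1)}$ claimed in (a) and (b). Moreover, the moving-plane method of \cite{DS} yields monotonicity of $u_\lambda$ along a cone of directions in a boundary tube --- which is what Proposition~\ref{Prop:1} extracts --- not the uniform-in-$\lambda$ comparability $|\nabla u_\lambda(x)|\asymp u_\nu(y)$ that you invoke and do not prove; and the inclusion you need, $\{u_\lambda\le s\}\subset\Omega_{\delta_0}$, goes the wrong way: what one gets is $\Omega_\varepsilon\subset\{u_\lambda\le s\}$ for $s\sim\|f(u_\lambda)\|_{L^1}^{1/(p-1)}$ (using \eqref{p-superlinear} to write $t^{p-1}\le Cf(t)$), and the failure of your inclusion does not make the target estimate trivial. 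The paper's actual route replaces the test function $\eta=T_su$ of Theorem~\ref{Theorem} by $\eta=\min\{\delta(x),\varepsilon\}$ in the stability inequality \eqref{StZu2}: since $\{u_\lambda>s\}\subset\Omega\setminus\Omega_\varepsilon$ where $\eta\equiv\varepsilon$, and $|\nabla\eta|\le1$ is supported in $\Omega_\varepsilon\subset\{u_\lambda\le s\}$, the right-hand side becomes $\int_{\{u_\lambda\le s\}}|\nabla u_\lambda|^{p}\,dx$ (power $p$, not $p+2$), which is bounded by $\lambda^\star s\|f(u_\lambda)\|_{L^1}\le C\|f(u_\lambda)\|_{L^1}^{p'}$ simply by multiplying $\Plambda$ by $T_su_\lambda$; then Proposition~\ref{Prop:2} and Theorem~\ref{Theorem:Sobolev} applied to $v=u_\lambda-s$, $q=2$, give (a) and the $L^{\frac{np}{n-p-2}}$ bound in (b), with no need for \eqref{L-infinty} or \eqref{Lq:estimate} in the form you use them.

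The $W^{1,p}_0$ step is also not salvageable as you describe it. Testing with $u_\lambda$ gives $\int_\Omega|\nabla u_\lambda|^p\,dx=\lambda\int_\Omega u_\lambda f(u_\lambda)\,dx$, but for an arbitrary superlinear $f$ this cannot be controlled by $\|u_\lambda\|_{L^{np/(n-p-2)}}$ and $\|f(u_\lambda)\|_{L^1}$ through H\"older or interpolation, even with constants depending on $f$: one would need $f(u_\lambda)$ in a dual Lebesgue class uniformly in $\lambda$, which is exactly the kind of information that is missing (compare the open problem on $f(u^\star)\in L^1(\Omega)$). The paper instead proves a Poho\v{z}aev-type inequality (Lemma~\ref{lemma:Poho}, using $J(u_\lambda)\le J(0)=0$ by minimality) to reduce the energy to the boundary term $\int_{\partial\Omega}|\nabla u_\lambda|^p\,d\sigma$, and bounds this via the moving-plane boundary estimate ($f(u_\lambda)\le f(C\|f(u_\lambda)\|_{L^1}^{1/(p-1)})$ in $\Omega_\varepsilon$) combined with Lieberman's $C^{1,\beta}$ estimates up to the boundary; this is precisely why $C'$ is allowed to depend on $f$ and $\|f(u_\lambda)\|_{L^1}$. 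Your treatment of (i)--(ii), via \cite{S} and passing the uniform estimates to the limit, does agree with the paper.
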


\begin{rem}
If $f(u_\lambda)$ is bounded in $L^1(\Omega)$ by a constant independent of $\lambda$,
then parts $(a)$ and $(b)$ will lead automatically to the assertions $(i)$ and $(ii)$
stated in the theorem (without the requirement that $p\geq 2$ and \eqref{convex:assump} hold true).
However, as we said before, the estimate $f(u^\star)\in L^1(\Omega)$ is unknown in
the general case, \textit{i.e}, for arbitrary positive and increasing nonlinearities
$f$ satisfying \eqref{p-superlinear} and arbitrary $p>1$.
\end{rem}

\begin{open}
Is it true that $f(u^\star)\in L^1(\Omega)$ for arbitrary positive
and increasing nonlinearities $f$ satisfying \eqref{p-superlinear}?
\end{open}

Under assumptions $p\geq 2$ and \eqref{convex:assump} it is proved in \cite{S} that
$f(u^\star)\in L^r(\Omega)$ for all $1\leq r<n/(n-p')$ when $n\geq p'$ and $f(u^\star)
\in L^\infty(\Omega)$ if $n<p'$. In particular, one has $f(u^\star)\in L^1(\Omega)$
independently of the dimension $n$ and the parameter $p>1$. As a consequence,
assertions $(i)$ and $(ii)$ follow immediately from parts $(a)$ and $(b)$ of the
theorem.

To prove the $L^r$ \textit{a priori} estimates stated in part $(a)$
and $(b)$ we make three steps. First, we use the strict convexity of
the domain $\Omega$ to prove that
$$
\{x\in\Omega:{\rm dist}(x,\partial\Omega)<\varepsilon\}
\subset \{x\in\Omega:u_\lambda(x)<s\}
$$
for a suitable $s$. This is done using a moving plane procedure for
$p$-Laplace equations (see Proposition~\ref{Prop:1} below). Then,
we prove that the Morrey and Sobolev type inequalities stated in
Theorem~\ref{Theorem:Sobolev} for smooth functions, also hold for
regular solutions of \eqref{problem} when $1\leq q\leq 2$. Finally,
taking a test function $\eta$ related to ${\rm dist}(\cdot,\partial\Omega)$
in \eqref{ineq:key} and proceeding as in the proof
of Theorem~\ref{Theorem} we will obtain the $L^r$ \textit{a priori}
estimates established in the theorem.

The energy estimate established in parts (ii) and (b) of Theorem~\ref{Theorem2}
follows by extending the arguments of Nedev \cite{Nedev01} for the Laplacian
case (see also Theorem~2.9 in \cite{CS}). First, using a Poho${\rm\check{z}}$aev
identity we obtain
\begin{equation}\label{key:Poho}
\int_\Omega|\nabla u_\lambda|^p\ dx
\leq
\frac{1}{p'}\int_{\partial\Omega}|\nabla u_\lambda|^p\ x\cdot\nu\ d\sigma,
\qquad\textrm{for all }p>1\textrm{ and }\lambda\in(0,\lambda^\star),
\end{equation}
where $d\sigma$ denotes the area element in $\partial\Omega$ and
$\nu$ is the outward unit normal to $\Omega$. Then, using the strict
convexity of the domain (as in the $L^r$ estimates) and standard regularity 
estimates for $-\Delta_p u=\lambda f(u_\lambda(x))$ in a neighborhood of 
the boundary, we are able to control the right hand side of \eqref{key:Poho} by
a constant whose dependence on $\lambda$ is given by a function of 
$\|f(u_\lambda)\|_{L^1(\Omega)}$.

\begin{rem}
Let us compare our regularity results with the sharp ones proved
by Cabr\'e, Capella, and the second author in \cite{CCS09} when $\Omega$
is the unit ball $B_1$ of $\mathbb{R}^n$. In the radial case,
the extremal solution $u^\star$ of $\Pext$ is bounded if the dimension
$n< p+\frac{4p}{p-1}$. Moreover, if $n\geq p+\frac{4p}{p-1}$ then
$u^\star\in W^{1,r}_0(B_1)$ for all $1\leq r<\bar{r}_1$, where
$$
\bar{r}_1:=\frac{np}{n-2\sqrt{\frac{n-1}{p-1}}-2}.
$$
In particular, $u^\star\in L^r(B_1)$ for all $1\leq r<\bar{r}_0$, where
$$
\bar{r}_0:=\frac{np}{n-2\sqrt{\frac{n-1}{p-1}}-p-2}.
$$
It can be shown that these regularity results are sharp by taking
the exponential and power nonlinearities.

Note that the $L^r(\Omega)$-estimate established in Theorem~\ref{Theorem2}
differs with the sharp exponent $\bar{r}_0$ defined above by the term
$2\sqrt{\frac{n-1}{p-1}}$. Moreover, observe that $\bar{r}_1$ is larger
than $p$ and tends to it as $n$ goes to infinity. In particular, the best
expected regularity independent of the dimension $n$ for the extremal
solution $u^\star$ is $W^{1,p}_0(\Omega)$, which is the one we obtain
in Theorem~\ref{Theorem2}.

\end{rem}

\subsection{Outline of the paper}
The paper is organized as follows. In section \ref{section2} we
prove Theorem~\ref{thm:Ipq} and the geometric type inequalities
stated in Theorem~\ref{Theorem:Sobolev}. In section~\ref{section3}
we prove that Theorem~\ref{Theorem:Sobolev} holds for solutions of
\eqref{problem} when $1\leq q\leq 2$. Moreover we give boundary
estimates when the domain is strictly convex. In section~\ref{section5}, we
present the semi-stability condition \eqref{semi-stab1} and the
space of admissible functions $H_0$. The rest of the section deals
with the regularity of semi-stable solutions proving
Theorems~\ref{Theorem}~and~\ref{Theorem2}.

\section{Geometric Hardy-Sobolev type inequalities}\label{section2}
In this section we prove Theorems \ref{thm:Ipq} and \ref{Theorem:Sobolev}.
As we said in the introduction, the geometric inequalities established in
Theorem~\ref{Theorem:Sobolev} are new for the range $1\leq q<p$ since the case
$q\geq p$ was proved in \cite{CS}. However, we will give the proof in all
cases using Schwarz symmetrization, giving an alternative proof for the
known range of parameters $q\geq p$.

We start recalling the definition of Schwarz symmetrization of a
compact set and of a Lipschitz continuous function.
\begin{defin}\label{Schwarz-symm}
We define the \textit{Schwarz symmetrization of a compact set $D\subset\mathbb{R}^n$}
as
$$
D^*:=\left\{
\begin{array}{lll}
B_R(0) \textrm{ with } R=(|D|/|B_1|)^{1/n}&\textrm{if}&D\neq \emptyset,\\
\emptyset&\textrm{if}&D= \emptyset.
\end{array}
\right.
$$
Let $v$ be a Lipschitz continuous function in $\overline{\Omega}$ and
$\Omega_t:=\{x\in\Omega:|v(x)|\geq t\}$. We define
the \textit{Schwarz symmetrization of $v$} as
$$
v^*(x):= \sup\{t\in \mathbb{R}: x\in \Omega_t^*\}.
$$
Equivalently, we can define the Schwarz symmetrization of $v$ as
$$
v^*(x)=\inf\{t\geq 0:V(t)<|B_1||x|^n\},
$$
where $V(t):=|\Omega_t|=|\{x\in\Omega:|v(x)|> t\}|$ denotes the distribution
function of $v$.
\end{defin}

The first ingredient in the proof of Theorem \ref{thm:Ipq}
is the isoperimetric inequality for functions $v$ in
$W^{1,1}_0(\Omega)$:
\begin{equation}\label{talenti}
n|B_1|^{1/n} V(t)^{(n-1)/n}\leq P(t):=\frac{d}{dt}\int_{\{|v|\leq t\}}|\nabla v|\ dx
\qquad\text{for a.e. } t>0,
\end{equation}
where $P(t)$ stands for the perimeter in the
sense of De Giorgi (the total variation of the characteristic function of
$\{x\in \Omega: |v(x)|>t\}$).

The second ingredient is the following Sobolev inequality on compact hypersurfaces
without boundary due to Michael and Simon \cite{MS} and to Allard \cite{A}.

\begin{thm}[\cite{A,MS}]\label{ThmMS}
Let  $M\subset  \mathbb{R}^{n}$ be a $C^\infty$ immersed $(n-1)$-dimensional
compact hypersurface without boundary and $\phi\in C^\infty(M)$.
If $q\in  [1, n-1)$, then there exists a constant $A$ depending only on
$n$ and $q$ such that
\begin{equation}\label{MS}
\left( \int_M |\phi|^{q^\star} d\sigma \right)^{1/q^\star} \leq
A
\left( \int_M |\nabla \phi|^q +  |H\phi|^q \ d\sigma \right)^{1/q},
\end{equation}
where $H$ is the mean curvature of $M$, $d\sigma$ denotes the area element in $M$,
and $q^\star = \frac{(n-1)q}{n-1-q}$.
\end{thm}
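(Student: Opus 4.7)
The plan is to follow the classical Michael--Simon strategy in two steps. First, I would reduce the general case $1\le q<n-1$ to the endpoint $q=1$ by a power substitution: apply the $q=1$ inequality to $\psi:=|\phi|^{\beta}$ with $\beta=q(n-2)/(n-1-q)$, the unique exponent making $\beta(n-1)/(n-2)=q^{\star}$. Since $|\nabla\psi|\le\beta|\phi|^{\beta-1}|\nabla\phi|$, H\"older's inequality with exponents $q$ and $q/(q-1)$ (together with the identity $(\beta-1)q/(q-1)=q^{\star}$) turns the right-hand side into a product of $\|\phi\|_{L^{q^{\star}}}^{\beta-1}$ and $(\int_{M}|\nabla\phi|^{q}+|H\phi|^{q}\,d\sigma)^{1/q}$; the surplus $L^{q^{\star}}$ factor is then absorbed into the left-hand side.

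For the base case $q=1$, the goal is
\[
\Bigl(\int_{M}|\phi|^{(n-1)/(n-2)}\,d\sigma\Bigr)^{(n-2)/(n-1)}\le A\int_{M}(|\nabla\phi|+|H\phi|)\,d\sigma.
\]
I would deduce this from the isoperimetric-type inequality on $M$,
\[
\mathcal{H}^{n-1}(E)^{(n-2)/(n-1)}\le C\Bigl(\mathcal{H}^{n-2}(\partial_{M}E)+\int_{E}|H|\,d\sigma\Bigr),
\]
valid for every smooth open $E\subset M$, combined with the coarea formula applied to $|\phi|$: write $\int_{M}|\phi|^{(n-1)/(n-2)}\,d\sigma$ as a layer-cake integral of $\mathcal{H}^{n-1}(\{|\phi|>t\})$, apply the isoperimetric inequality on each level set $\{|\phi|>t\}$, and use the coarea identity $\int_{0}^{\infty}\mathcal{H}^{n-2}(\{|\phi|=t\})\,dt=\int_{M}|\nabla\phi|\,d\sigma$ to re-assemble the right-hand side.

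To prove the isoperimetric inequality I would test the first variation formula against the radial cutoff vector field $X_{r}(x):=(x-x_{0})\eta_{r}(|x-x_{0}|)$: the identity $\int_{M}\operatorname{div}_{M}X_{r}+H\cdot X_{r}\,d\sigma=0$, together with the standard computation of $\operatorname{div}_{M}X_{r}$ in terms of the tangential projection of $x-x_{0}$, yields the monotonicity-type inequality
\[
\frac{d}{dr}\Bigl(\frac{\mathcal{H}^{n-1}(M\cap B_{r}(x_{0}))}{r^{n-1}}\Bigr)\ge -\frac{1}{r^{n-1}}\int_{M\cap B_{r}(x_{0})}|H|\,d\sigma.
\]
Integrating this inequality from a small starting scale furnishes a uniform lower density bound at every density point of $E$, and a Vitali--Besicovitch covering of $E$ by balls on which this bound is nearly saturated converts the density estimate into the desired isoperimetric inequality with a constant depending only on $n$.

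The main obstacle is the monotonicity step. The constant must depend only on the ambient dimension $n$ (not on the diameter, injectivity radius, or any global mean-curvature norm of $M$), and because $M$ is merely immersed one should interpret the calculation in the varifold sense so that self-intersections do not invalidate the first variation identity. Tracking the defect $\int_{M\cap B_{r}(x_{0})}|H|\,d\sigma$ as one iterates the monotonicity from small scales up to a scale comparable with $\mathcal{H}^{n-1}(E)^{1/(n-1)}$ requires a careful Gr\"onwall-type estimate, set up so that the resulting constant is indeed dimension-only.
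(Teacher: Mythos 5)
This statement is not proved in the paper at all: Theorem~\ref{ThmMS} is imported verbatim from Michael--Simon \cite{MS} and Allard \cite{A}, and is then used as a black box in the proof of Theorem~\ref{thm:Ipq}. So there is no ``paper proof'' to compare against; what you have written is, in effect, a sketch of the original Michael--Simon argument, and as a sketch it is sound. Your reduction from general $q\in[1,n-1)$ to $q=1$ is the standard power trick and the algebra checks out: with $\beta=q(n-2)/(n-1-q)$ one has $\beta(n-1)/(n-2)=q^\star$ and $(\beta-1)q/(q-1)=q^\star$, so H\"older plus absorption (legitimate since $\int_M|\phi|^{q^\star}\,d\sigma$ is finite by compactness, and trivial if it vanishes) gives \eqref{MS} up to replacing $(|\nabla\phi|+|H\phi|)^q$ by $2^{q-1}(|\nabla\phi|^q+|H\phi|^q)$; one should smooth $|\phi|^\beta$, e.g.\ via $(\phi^2+\varepsilon^2)^{\beta/2}$, since it need not be $C^\infty$ where $\phi=0$. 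The passage from the isoperimetric inequality with mean-curvature term to the $L^1$ Sobolev inequality is also standard, though stated slightly loosely: what you need is the norm form of the layer-cake bound, $\|\phi\|_{L^{(n-1)/(n-2)}}\le\int_0^\infty \mathcal{H}^{n-1}(\{|\phi|>t\})^{(n-2)/(n-1)}\,dt$, applied together with Sard's theorem to make a.e.\ level set smooth, the coarea identity for $\int_M|\nabla\phi|$, and Fubini for the $\int|H||\phi|$ term.

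The one place where your write-up is genuinely thin is the step you yourself flag: deducing the isoperimetric inequality from the monotonicity-type inequality. The differential inequality you state for $r\mapsto \mathcal{H}^{n-1}(M\cap B_r)/r^{n-1}$ is correct (it follows from the first variation with $X=\gamma(|x-x_0|)(x-x_0)$ after discarding a nonnegative term), but one cannot simply ``integrate from small scales up to a scale comparable with $\mathcal{H}^{n-1}(E)^{1/(n-1)}$'': the curvature defect may destroy the density lower bound at large radii. In Michael--Simon this is handled by choosing, for each point, the first radius at which a mean-value-type inequality fails, and then running a Vitali/Besicovitch covering at those radii; in fact their argument is carried out directly at the level of the function $\phi$ rather than through the set-isoperimetric inequality, although your route (sets first, then coarea) also works for smooth compact immersed hypersurfaces, with multiplicity handled in the varifold sense exactly as you indicate. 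So: correct classical plan, with the hard quantitative core (the stopping-radius/covering argument producing a constant depending only on $n$ and $q$) acknowledged but not supplied; since the paper itself simply cites \cite{A,MS} for this, that level of detail is arguably all that is expected here.
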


As we said in the introduction it is well known that Schwarz symmetrization
preserves the $L^r$-norm and decreases the $W^{1,r}$-norm.
Let us prove that it also decreases (up to a multiplicative constant)
the functional $I_{p,q}$ defined in \eqref{Ipq} using the isoperimetric inequality
\eqref{talenti} and the geometric inequality \eqref{MS} applied to $M=M_t=\{x\in\Omega: |v(x)|=t\}$
and $\phi=|\nabla v|^{(p-1)/q}$.

\begin{proof}[Proof of Theorem {\rm\ref{thm:Ipq}}]
Let $v\in C_0^\infty(\overline{\Omega})$, $p\geq 1$, and $1\leq q<n-1$.
By Sard's theorem, almost every $t\in(0,\|v\|_{L^\infty(\Omega)})$ is a
regular value of $|v|$. By definition, if $t$ is a regular value of $|v|$,
then $\left|\nabla v(x)\right|>0$ for all $x\in\Omega$ such that
$|v(x)|=t$. Therefore, $M_t:=\{x\in\Omega: |v(x)|=t\}$ is a
$C^{\infty}$ immersed $(n-1)-$dimensional compact hypersurface of
$\mathbb{R}^n$ without boundary for every regular value $t$ .
Applying inequality \eqref{MS} to $M=M_t$ and
$\phi=|\nabla v|^{(p-1)/q}$ we obtain
\begin{equation}\label{MSv}
\left( \int_{M_t} |\nabla v|^{(p-1)\frac{q^\star}{q}} \,
d\sigma \right)^{q/q^\star}
\leq
A^q \int_{M_t} \Big|\nabla_{T,v} |\nabla v|^{\frac{p-1}{q}}\Big|^q
+ |H_v|^q |\nabla v|^{p-1} \, d \sigma
\end{equation}
for a.e. $t\in(0,\|v\|_{L^\infty(\Omega)})$, where $H_v$ denotes the mean curvature of
$M_t$, $d\sigma$ is the area element in $M_t$, $A$ is the constant in \eqref{MS} which depends
only on $n$ and $q$, and
$$
q^\star:=\frac{(n-1)q}{n-1-q}.
$$
Recall that $V(t)$, being a nonincreasing function, is differentiable almost
everywhere and, thanks to the coarea formula and that almost every $t\in(0,\|v\|_{L^\infty(\Omega)})$
is a regular value of $|v|$, we have
\begin{equation*}
- V'(t) = \int_{M_t} \frac{1}{|\nabla v|} \, d\sigma
\qquad\textrm{and}\qquad
P(t) = \int_{M_t}  d\sigma\qquad\textrm{for a.e. }t\in(0,\|v\|_{L^\infty(\Omega)}).
\end{equation*}

Therefore applying Jensen inequality and then using the isoperimetric inequality
\eqref{talenti}, we obtain
\begin{equation}\label{new1}
\left( \int_{M_t} |\nabla v|^{(p-1)\frac{q^\star}{q}+1} \,
\frac{d\sigma}{|\nabla v|} \right)^{\frac{q}{q^\star}}
\geq
\frac{P(t)^{p-1+\frac{q}{q^\star}}}{\left(- V'(t)\right)^{p-1}}
\geq
\frac{(A_1 V(t)^{\frac{n-1}{n}})^{p-1+\frac{q}{q^\star}}}
{\left(- V'(t) \right)^{p-1}}
\end{equation}
for a.e. $t\in(0,\|v\|_{L^\infty(\Omega)})$, where $A_1:=n|B_1|^{1/n}$.

Note that for radial functions the inequalities in \eqref{new1} are
equalities. Therefore, since the Schwarz symmetrization $v^*$ of $v$
is a radial function and it satisfies \eqref{MSv}, with an equality
and with constant $A=|\partial B_1|^{-1/(n-1)}$, we obtain
\begin{equation}\label{new2}
\begin{array}{lll}
\displaystyle \left( \int_{\{|v^*|=t\}} |\nabla v^*|^{(p-1)\frac{q^\star}{q}} \,
d\sigma \right)^{q/q^\star}
&=&\displaystyle
|\partial B_1|^{-\frac{q}{n-1}}\int_{\{v^*=t\}} |H_{v^*}|^q |\nabla v^*|^{p-1} \, d \sigma\\
&=&\displaystyle
\frac{(A_1 V(t)^{\frac{n-1}{n}})^{p-1+\frac{q}{q^\star}}}
{\left(- V'(t) \right)^{p-1}}.
\end{array}
\end{equation}
for a.e. $t\in(0,\|v\|_{L^\infty(\Omega)})$.
Here, we used that $V(t)=|\{|v|>t\}|=|\{|v^*|>t\}|$ for a.e. $t\in(0,\|v\|_{L^\infty(\Omega)})$.

Therefore, from \eqref{MSv}, \eqref{new1}, and \eqref{new2}, we obtain
$$
|\partial B_1|^{-\frac{q}{n-1}}\int_{\{v^*=t\}} |H_{v^*}|^q |\nabla v^*|^{p-1} \, d \sigma
\leq
A^q \int_{M_t} \Big|\nabla_{T,v} |\nabla v|^{\frac{p-1}{q}}\Big|^q
+ |H_v|^q |\nabla v|^{p-1} \, d \sigma,
$$
for a.e. $t\in(0,\|v\|_{L^\infty(\Omega)})$.
Integrating the previous inequality with respect to $t$ on
\linebreak $(0,\|v\|_{L^\infty(\Omega)})$ and using the coarea formula
we obtain inequality \eqref{comp_integrals}, with the explicit constant
$C=A^\frac{q}{p} |\partial B_1|^\frac{q}{(n-1)p}$, proving the result.
\end{proof}

\begin{rem}
We obtained the explicit admissible constant
$C=A^\frac{q}{p} |\partial B_1|^\frac{q}{(n-1)p}$
in \eqref{comp_integrals}, where $A$ is the universal constant appearing
in \eqref{MS}.
\end{rem}

We prove Theorem \ref{Theorem:Sobolev} using Theorem~\ref{thm:Ipq} and known
results on one dimensional weighted Sobolev inequalities.
\begin{proof}[Proof of Theorem~{\rm\ref{Theorem:Sobolev}}]
Let $v\in C_0^\infty(\overline{\Omega})$ and $v^*$ its Schwarz symmetrization.
Recall that $v^*$ is defined in $B_R$ with $R=(|\Omega|/|B_1|)^{1/n}$.

(a) Assume $1+q<n<p+q$. Using H\"older inequality we obtain
\begin{equation}\label{pointwise}
\begin{array}{lll}
v^*(s)&=&\displaystyle \int_s^R |(v^*)'(\tau)|\ d\tau\\
&\leq&\displaystyle \left(\int_0^R |(v^*)'(\tau)|^p\tau^{-q}\tau^{n-1}\ d\tau\right)^{1/p}
\left(\int_s^R \tau^\frac{1+q-n}{p-1}\ d\tau\right)^{1/p'}
\end{array}
\end{equation}
for a.e. $s\in(0,R)$. In particular,
$$
v^*(s)\leq |\partial B_1|^{-1/p} \left(\frac{p-1}{p+q-n}\right)^{1/p'} \left(\frac{|\Omega|}{|B_1|}\right)^\frac{p+q-n}{np}I_{p,q}(v^*;B_R)
$$
for a.e. $s\in(0,R)$.
We conclude this case, by Theorem \ref{thm:Ipq}, noting that $\|v\|_{L^\infty(\Omega)}=v^*(0)$.

(b) Assume $n>p+q$. We use the following 1-dimensional weighted Sobolev inequality:
\begin{equation}\label{radial:Sob}
\left(\int_0^R|\varphi(s)|^{p_q^\star} s^{n-1}\ ds\right)^{1/p_q^\star}
\leq C(n,p,q)\left(\int_0^R s^{-q}|\varphi'(s)|^p s^{n-1}\ ds\right)^{1/p}
\end{equation}
with optimal constant
\begin{equation}\label{ctant:C}
C(n,p,q):=\left(\frac{p-1}{n-(p+q)}\right)^{1/p'}n^{-1/p_q^\star}
\left[\frac{\Gamma\left(\frac{np}{p+q}\right)}
{\Gamma\left(\frac{n}{p+q}\right)\Gamma\left(1+\frac{n(p-1)}{p+q}\right)}\right]^\frac{p+q}{np}
\end{equation}
stated in \cite{Trudinger97}. Applying inequality \eqref{radial:Sob} to $\varphi=v^*$
and noting that the $L^{p_q^\star}$-norm is preserved by Schwarz symmetrization, we obtain
$$
|\partial B_1|^{-1/p_q^\star}\left(\int_{\Omega}|v|^{p_q^\star}\ dx\right)^{1/p_q^\star}
\leq C(n,p,q)|\partial B_1|^{-1/p}\left(\int_{B_R} |x|^{-q}|\nabla v^*|^p \ dx\right)^{1/p}.
$$
Using Theorem \ref{thm:Ipq} again we prove \eqref{Sobolev} for $r=p_q^\star$. The remaining
cases, $1\leq r<p_q^\star$, now follow easily from H\"older inequality.

(c) Assume $n=p+q$. From \eqref{pointwise} and Theorem~\ref{thm:Ipq}
we obtain
$$
\begin{array}{lll}
v^*(s)
&\leq&\displaystyle
\left(\int_0^R |(v^*)'(\tau)|^p\tau^{-q}\tau^{n-1}\ d\tau\right)^{1/p}
\left(\int_s^R \tau^{-1}\ d\tau\right)^{1/p'}\\
&\leq&\displaystyle
|\partial B_1|^{-1/p} C I_{p,q}(v;\Omega)
\left(\ln\left(\frac{R}{s}\right)\right)^{1/p'}
\end{array}
$$
for a.e. $s\in(0,R)$. Equivalently
$$
\exp\left\{\left(\frac{v^*(s)}{|\partial B_1|^{-1/p}C I_{p,q}(v;\Omega)}\right)^{p'}\right\}
|\partial B_1|s^{n-1}
\leq
\frac{R}{s}|\partial B_1|s^{n-1}
$$
for a.e. $s\in(0,R)$. Integrating the previous inequality with respect to $s$ in $(0,R)$
we obtain
$$
\int_{B_R}\exp\left\{\left(\frac{v^*}{|\partial B_1|^{-1/p}C I_{p,q}(v;\Omega)}\right)^{p'}\right\}\ dx
\leq
|\partial B_1|\frac{R^n}{n-1}=\frac{n}{n-1}|\Omega|.
$$
We conclude the proof noting that the integral in inequality \eqref{Moser-Trudinger} is
preserved under Schwarz symmetrization.
\end{proof}

\begin{rem}\label{rmk:ctans}
Note that we obtained explicit admissible constants $C_1$, $C_2$, and $C_3$
in inequalities of Theorem \ref{Theorem:Sobolev}. More precisely,
we obtained
$$
C_1=|\partial B_1|^{-\frac{1}{p}} \left(\frac{p-1}{p+q-n}\right)^{\frac{1}{p'}}
\left(\frac{|\Omega|}{|B_1|}\right)^\frac{p+q-n}{np}
A^\frac{q}{p} |\partial B_1|^\frac{q}{(n-1)p},
$$
$$
C_2= C(n,p,q)|\partial B_1|^{\frac{1}{p_q^\star}-\frac{1}{p}}
A^\frac{q}{p} |\partial B_1|^\frac{q}{(n-1)p},
$$
and
$$
C_3=|\partial B_1|^{-\frac{1}{p}} A^{\frac{n-p}{p}}|\partial B_1|^{\frac{n-p}{(n-1)p}},
$$
where $A$ is the universal constant appearing in \eqref{MS} and $C(n,p,q)$ is
defined in \eqref{ctant:C}.

All the constants $C_i$ depend only on $n$, $p$, and $q$. However, the best constant $A$
in \eqref{MS} is unknown (even for mean convex hypersurfaces). Behind this Sobolev
inequality there is the following geometric isoperimetric inequality
\begin{equation}\label{isop:mean}
|M|^{\frac{n-2}{n-1}} \leq A_2\int_M |H(x)|\ d\sigma.
\end{equation}
Here, $M\subset  \mathbb{R}^{n}$ is a $C^\infty$ immersed $(n-1)$-dimensional
compact hypersurface without boundary and $H$ is the mean curvature of $M$ as in
Theorem \ref{ThmMS}. The best constant in \eqref{isop:mean} is also unknown
even for mean convex hypersurfaces.
\end{rem}

\section{Properties of solutions of $p$-Laplace equations}\label{section3}

In this section, we first establish an \textit{a priori} $L^\infty$
estimate in a neighborhood of the boundary $\partial\Omega$ for any
regular solution $u$ of \eqref{problem} when the domain $\Omega$ is
stricly convex. More precisely, we prove that there exists positive
constants $\varepsilon$ and $\gamma$, depending only on the domain
$\Omega$, such that
\begin{equation}\label{eqqq}
\Vert u\Vert_{L^\infty(\Omega_\varepsilon)}\leq \frac{1}{\gamma} \Vert u\Vert_{L^1 (\Omega)},
\ \ \text{ where }  \Omega_\varepsilon:=\{x\in\Omega\, :\, \text{\rm dist}(x,\partial\Omega)
<\varepsilon\}.
\end{equation}
Then, we establish that the geometric inequalities of
Theorem~\ref{Theorem:Sobolev} still hold for solutions of \eqref{problem} in the
smaller range $1\leq q\leq 2$. In the next section, these two
ingredients will allow us to obtain \textit{a priori} estimates for
semi-stable solutions.

Let $u \in W_{0}^{1,p}(\Omega)$ be a weak solution (\textit{i.e.}, a solution in
the distributional sense) of the problem
\begin{equation}\label{prob}
\left\{
\begin{array}{rcll}
-\Delta_p u &=& g(u)  &\textrm{in } \Omega, \\
 u&>& 0  &\textrm{in } \Omega, \\
 u &=& 0  &\textrm{on } \partial \Omega,
\end{array}
\right.
\end{equation}
where $\Omega$ is a bounded smooth domain in $\mathbb{R}^n$, with $n
\geq 2$, and $g$ is any positive smooth nonlinearity.

We say that $u \in W_{0}^{1,p}(\Omega)$ is a \textit{regular solution} of \eqref{prob}
if it satisfies the equation in the distributional sense and $g(u)\in L^\infty(\Omega)$.
By well known regularity results for degenerate
elliptic equations, one has that every regular solution $u$ belongs to $C^{1,\alpha}
(\Omega)$ for some $\alpha\in(0,1]$ (see \cite{DB,T}). Moreover,
$u\in C^1(\overline{\Omega})$ (see \cite{Lie}). This is the best
regularity that one can hope for solutions of $p$-Laplace equations.
Therefore, equation \eqref{prob} is always meant in a distributional
sense.

We prove the boundary \textit{a priori} estimate \eqref{eqqq} through a moving
plane procedure for the $p$-Laplacian which is developed in \cite{DS}.
\begin{proposition}\label{Prop:1}
Let $\Omega$ be a smooth bounded domain of $\mathbb{R}^{n}$ and $g$ any positive
smooth function. Let $u$ be any positive regular solution of \eqref{prob}.

If $\Omega$ is strictly convex, then there exist positive constants $\varepsilon$
and $\gamma$ depending only on the domain $\Omega$ such that
for every $x\in\Omega$ with $\text{\rm dist}(x,\partial\Omega)<\varepsilon$,
there exists a set $I_x\subset\Omega$ with the following properties:
$$
|I_x|\geq\gamma \qquad\text{and}\qquad
u(x) \leq u(y) \ \text{ for all }  y\in I_x.
$$
As a consequence,
\begin{equation}\label{L1boundary}
\Vert u\Vert_{L^\infty(\Omega_\varepsilon)}\leq \frac{1}{\gamma} \Vert u\Vert_{L^1 (\Omega)},
\ \ \text{ where }  \Omega_\varepsilon:=\{x\in\Omega\, :\, \text{\rm dist}(x,\partial\Omega)
<\varepsilon\}.
\end{equation}
\end{proposition}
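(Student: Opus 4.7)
My plan is to follow the moving plane method for the $p$-Laplacian of Damascelli and Sciunzi \cite{DS}, with the extra ingredient of extracting a uniform quantitative starting condition from the strict convexity of $\Omega$. The estimate \eqref{L1boundary} is a formal consequence of the existence of the sets $I_x$: integrating the pointwise inequality $u(x)\le u(y)$ over $I_x$ yields
$$
\gamma\,u(x)\;\le\;\int_{I_x}u(y)\,dy\;\le\;\|u\|_{L^1(\Omega)},
$$
so the whole task reduces to producing, for each $x\in\Omega_\varepsilon$, a set $I_x\subset\Omega$ of measure at least $\gamma$ on which $u$ dominates $u(x)$, with $\varepsilon$ and $\gamma$ depending only on $\Omega$.

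The basic tool is standard. For each unit vector $\nu$ write $T_\lambda^\nu:=\{y\cdot\nu=\lambda\}$, $\Sigma_\lambda^\nu:=\Omega\cap\{y\cdot\nu>\lambda\}$, and $R_\lambda^\nu$ for reflection across $T_\lambda^\nu$. The analysis of \cite{DS} furnishes a critical parameter $\lambda_0(\nu)<\sup_{y\in\Omega}y\cdot\nu$ such that, for every $\lambda\in[\lambda_0(\nu),\sup_{y\in\Omega}y\cdot\nu)$, the reflected cap $R_\lambda^\nu(\Sigma_\lambda^\nu)$ lies in $\Omega$ and $u(z)\le u(R_\lambda^\nu z)$ on $\Sigma_\lambda^\nu$. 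The key subtlety over the semilinear case is that $-\Delta_p$ degenerates at critical points of $u$, forcing one to apply comparison only on sets that avoid a small neighbourhood of $\{\nabla u=0\}$; this is the content of the weak comparison principle in \cite{DS}. Strict convexity then enters through a uniform version of the usual Gidas--Ni--Nirenberg initialisation: I claim there exist $\delta_0,\theta_0>0$ depending only on $\Omega$ such that for every $p\in\partial\Omega$ and every direction $\nu$ within angle $\theta_0$ of the outward unit normal $\nu(p)$ one has $\sup_{y\in\Omega}y\cdot\nu-\lambda_0(\nu)\ge\delta_0$. The pointwise ingredient is that at a strictly convex boundary point the tangent hyperplane meets $\overline\Omega$ only at the contact point, so the procedure can be started; compactness of $\partial\Omega$ together with continuous dependence of $\lambda_0$ on $\nu$ promote this to the uniform bound.

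With these pieces assembled I take $\varepsilon:=\delta_0/2$, let $p$ be a closest boundary point to $x\in\Omega_\varepsilon$, and define
$$
I_x\;:=\;\bigl\{R_\lambda^\nu(x):\nu\in\mathcal{C}_p,\ \lambda\in J_\nu\bigr\}\subset\Omega,
$$
where $\mathcal{C}_p$ is the open cone of directions within angle $\theta_0$ of $\nu(p)$ and $J_\nu$ is the interval of parameters $\lambda$ for which $x\in\Sigma_\lambda^\nu$ and $\lambda\ge\lambda_0(\nu)$, which has length bounded below by a positive constant depending only on $\Omega$. Varying $\nu$ over the $(n-1)$-dimensional cone and $\lambda$ over the $1$-dimensional interval yields an $n$-dimensional family of reflections whose parameterising map has Jacobian bounded below uniformly in terms of $\theta_0$ and the distance of $x$ to the hyperplanes $T_{\lambda_0(\nu)}^\nu$; hence $|I_x|\ge\gamma$ for some $\gamma>0$ depending only on $\Omega$. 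By construction $u(x)\le u(y)$ on $I_x$, and the bound \eqref{L1boundary} follows from the opening display.

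The main obstacle will be the uniformity in the second paragraph. The critical parameters $\lambda_0(\nu)$ produced by \cite{DS} a priori depend on the critical set of the solution $u$, and the weak comparison principle must be invoked on sets avoiding that set. Showing that the geometric strict convexity of $\Omega$ bounds $\lambda_0(\nu)$ away from the endpoint uniformly in $\nu\in S^{n-1}$ and in the specific solution $u$ is the technical heart of the argument; once this uniform starting position is established, the rest of the plan is a direct reflection and change-of-variables computation.
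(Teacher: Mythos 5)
Your overall strategy is the same as the paper's: moving planes for the $p$-Laplacian following \cite{DS}, a uniform starting position extracted from the strict convexity of $\Omega$, a set $I_x$ swept out by reflections (equivalently, by monotonicity of $u$ along a cone of directions near the outward normal), and then integration of $u(x)\leq u(y)$ over $I_x$ to get \eqref{L1boundary}. The reduction to constructing $I_x$, the cone/Jacobian computation, and the final integration are all fine and match the paper.

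The genuine gap is precisely the step you yourself flag as the ``technical heart'' and leave unresolved: the uniform lower bound on how far the plane can be moved, with constants depending only on $\Omega$ and not on the particular solution $u$. Your proposed route (continuity of $\lambda_0(\nu)$ in $\nu$ plus compactness of $\partial\Omega$) is not substantiated, and as you concede, the critical parameter produced by a direct appeal to \cite{DS} a priori depends on $u$ (e.g.\ through its critical set), which would destroy the uniformity in $\lambda$ that the proposition needs for its application to minimal solutions $u_\lambda$. The missing idea, which is how the paper closes this, is to make $\lambda_0$ a purely geometric constant: by strict convexity and boundedness of $\Omega$ there is $\lambda_0=\lambda_0(\Omega)$, independent of the direction $e$ and of $u$, such that the reflected cap $\Sigma'_{\lambda,e}$ stays inside $\Omega$ for all $\lambda\leq\lambda_0$. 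The solution-dependent PDE input is then confined to two quotable tools from \cite{DS}: the weak comparison principle in small domains, which applies on caps of small measure and does \emph{not} require excising a neighbourhood of $Z_u=\{\nabla u=0\}$ (so the degeneracy worry you raise is not the actual obstruction at the initialization stage), used to start the plane for $\lambda-a(e)$ small; and the strong comparison principle plus Hopf lemma to continue the plane up to the geometric $\lambda_0$. One also uses the Hopf-type boundary lemma of V\'azquez \cite{V} to see $\partial u/\partial\nu<0$ on $\partial\Omega$, hence $Z_u\cap\partial\Omega=\emptyset$, so the boundary strip where the cones live is free of critical points. Without this (or an equivalent) argument the claim that $\varepsilon$ and $\gamma$ depend only on $\Omega$ — the entire content of the proposition — is not established, so as written the proposal is an outline of the paper's proof with its central step missing.
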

\begin{proof}
First let us observe that from the regularity of the solution
$u$ up to the boundary $\partial \Omega$ and the fact that $\Delta_p u
\leq 0$, we can apply the generalized Hopf boundary lemma \cite{V}
to see that the normal derivative $\frac{\partial u}{\partial \nu} < 0$ on
$\partial \Omega$.
Thus, if we let $Z_u := \{ x \in \Omega : \nabla u (x) = 0 \}$ be the
critical set of $u$, we have that $Z_u \cap \partial \Omega = \emptyset$.
By the compactness of both sets, there exists $\varepsilon_0 > 0$ such that
$Z_u \cap \Omega_\varepsilon = \emptyset$ for any $\varepsilon \leq \varepsilon_0$.

We will now prove that this neighborhood of the boundary is in fact
independent of the solution $u$.
In order to begin a moving plane argument we need some notations:
let $e \in S^{n-1}$ be any direction and for $\lambda \in \mathbb{R}$ let us
consider the hyperplane
$$
T = T_{\lambda,e} = \{ x \in \mathbb{R}^n : x \cdot
e = \lambda \}
$$
and the corresponding cap
$$
\Sigma = \Sigma_{\lambda,e}
= \{ x \in \Omega : x \cdot e < \lambda \}.
$$
Set
$$
a(e) = \inf_{x \in \Omega} x \cdot e
$$
and for any $x \in \Omega$, let $x' = x_{\lambda,e}$ be its reflection with respect to
the hyperplane $T$, \textit{i.e.},
$$
x' = x + (\lambda - 2 x \cdot e)\ e.
$$
For any $\lambda > a (e)$ the cap $$\Sigma' = \{ x \in \Omega :
x' \in \Sigma \}$$ is the (non-empty) reflected cap of $\Sigma$ with
respect to $T$.

Furthermore, consider the function $v(x) = u (x') = u
(x_{\lambda,e})$, which is just the reflected of $u$ with respect to
the same hyperplane.
By the boundedness of $\Omega$, for $\lambda - a(e)$ small, we have
that the corresponding reflected cap $\Sigma'$ is contained in
$\Omega$. Moreover, by the strict convexity of $\Omega$, there exists
$\lambda_0 = \lambda_0 (\Omega)$ (independent of $e$) such that $\Sigma'$
remains in $\Omega$ for any $\lambda \leq \lambda_0$.

Let us then compare the function $u$ and its reflection $v$ for such
values of $\lambda$ in the cap $\Sigma$. First of all, both functions
solve the same equation since $\Delta_p$ is invariant under
reflection; secondly, on the hyperplane $T$ the functions coincide,
whereas for any $x \in \partial \Sigma \cap \partial \Omega$ we have that $u
(x) = 0$ and that $v(x) = u (x') > 0$, since the reflection $x' \in
\Omega$.
Hence we can see that:
\begin{equation*}
\Delta_p (u) + f (u) = \Delta_p (v) + f (v) \text{ in } \Sigma, \quad
u \leq v \text{ on } \partial \Sigma.
\end{equation*}

Again by the boundedness of $\Omega$, if $\lambda - a(e)$ is small,
the measure of the cap $\Sigma$ will be small. Therefore, from the
Comparison Principle in small domains (see \cite{DS}) we have that
$u \leq v \text{ in } \Sigma$. Moreover, by Strong Comparison
Principle and Hopf Lemma, we see that $u \leq v \text{ in }
\Sigma_{\lambda,e}$ for any $a(e) < \lambda \leq \lambda_0$.
In particular, this spells that $u(x)$ is nondecreasing in the $e$
direction for all $x \in \Sigma$.

Now, fix $x_0 \in \partial \Omega$ and let $e=\nu (x_0)$ be the unit normal
to $\partial \Omega$ at $x_0$. By the convexity assumption
$T_{a(\nu(x_0)),\nu(x_0)} \cap \partial\Omega = \{ x_0 \}$.
If we let $\theta \in S^{n-1}$ be another direction close to the
outer normal $\nu (x_0)$, the reflection of the caps
$\Sigma_{\lambda,\theta}$ with respect to the hyperplane
$T_{\lambda,\theta}$ (which is close to the tangent one) would still be
contained in $\Omega$ thanks to its strict convexity.
So the above argument could be applied also to the new direction
$\theta$. In particular, we see that we can get a neighborhood
$\Theta$ of $\nu (x_0)$ in $S^{n-1}$ such that $u (x)$ is
nondecreasing in every direction $\theta \in \Theta$ and for any $x$
such that $x \cdot \theta < \frac{\lambda_0}{2}$.

By eventually taking a smaller neighborhood $\Theta$, we may assume
that $$|x \cdot (\theta - \nu (x_0))| < \lambda_0 /8$$ for any $x \in
\Sigma_{\lambda_0,\theta}$ and $\theta \in \Theta$.
Moreover, noticing that $$x \cdot \theta = x \cdot (\theta - \nu (x_0)) + x
\cdot \nu (x_0)$$ and $$\frac{\lambda_0}{2} = \frac{\lambda_0}{8} +
\frac{3 \lambda_0}{8} > x \cdot \theta > \frac{\lambda_0}{8} -
\frac{\lambda_0}{8} = 0$$ it is then easy to see that $u$ is
nondecreasing in any direction $\theta \in \Theta$ on $\Sigma_0 = \{
x \in \Omega : \frac{\lambda_0}{8} < x \cdot \nu (x_0) <\frac{3\lambda_0}{8}
\}$.

Finally, let us choose $\varepsilon = \frac{\lambda_0}{8}$. Fix any point $x \in
\Omega_\varepsilon$ and let $x_0$ be its projection onto $\partial \Omega$.
{F}rom the above arguments we see that $$u(x) \leq u( x_0 - \varepsilon \nu
(x_0)) \leq u(y)$$ for any $y \in I_x$, where $I_x \subset \Sigma_0$
is a truncated cone with vertex at $x_1$, opening angle $\Theta$, and
height $\frac{\lambda_0}{4}$.
Hence, we have obtained that there exists a positive constant $\gamma = \gamma
(\Omega, \varepsilon)$ such that $|I_x| \geq \gamma$ and $u(x) \leq u(y)$
for any $y \in I_x$.
Finally, choosing $x_\varepsilon$ as the maximum of $u$ in $\Omega_\varepsilon$, we get
\begin{equation*}
\Vert u \Vert_{L^\infty(\Omega_\varepsilon)} = u_\varepsilon (x_\varepsilon) \leq
\frac{1}{\gamma} \int_{I_{x_{\varepsilon}}} u (y) \, dy \leq
\frac{1}{\gamma} \Vert u\Vert_{L^1 (\Omega)}
\end{equation*}
which proves \eqref{L1boundary}.
\end{proof}

We will now prove that inequalities in Theorem \ref{Theorem:Sobolev}
are also valid for a positive solution $u$ of \eqref{prob} in the
smaller range $1\leq q \leq 2$.
To do this, we will construct an approximation of $u$
through smooth functions and see that, thanks to strong uniform
estimates on this approximation, we can pass to the limit in all of
the inequalities.

\begin{proposition}\label{Prop:2}
Let $\Omega$ be a smooth bounded domain of $\mathbb{R}^{n}$ and $g$
any positive smooth function. Let $u$ be any positive regular
solution of \eqref{prob}. If  $1\leq q \leq 2$, then inequalities in
Theorem {\rm\ref{Theorem:Sobolev}} hold for $v=u$. Given $s>0$, the same
holds true also for $v = u-s$ and $\Omega$ replaced by $\Omega_s
:= \{ x\in \Omega : u > s\}$.
\end{proposition}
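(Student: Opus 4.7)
The idea is to approximate the regular solution $u$, which lies only in $C^{1,\alpha}(\overline{\Omega})$, by a sequence $\{u_k\}\subset C_0^\infty(\overline{\Omega})$ and pass to the limit in the inequalities of Theorem~\ref{Theorem:Sobolev}. The obstruction is that $I_{p,q}$ involves second derivatives of $u$, through the mean curvature of the level sets and the tangential gradient of $|\nabla u|^{p/q}$, which are not a priori controlled on the critical set $Z_u=\{\nabla u=0\}$.

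For the approximation I would regularize the degenerate equation by solving
\[
-\mathrm{div}\bigl((|\nabla u_k|^2+1/k)^{(p-2)/2}\nabla u_k\bigr)=g(u)\quad\text{in }\Omega,\qquad u_k=0\text{ on }\partial\Omega,
\]
so that $u_k\in C^\infty(\Omega)\cap C^{1,\beta}(\overline{\Omega})$. Classical theory (Tolksdorf, DiBenedetto, Lieberman) gives $u_k\to u$ in $C^{1,\beta}(\overline{\Omega})$ and $u_k\to u$ in $C^{2,\beta}_{\mathrm{loc}}(\Omega\setminus Z_u)$. A slight truncation together with a convolution in a thin boundary strip replaces each $u_k$ by an element of $C_0^\infty(\overline{\Omega})$ with a negligible effect on the $L^r$ norms and on $I_{p,q}$. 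Theorem~\ref{Theorem:Sobolev} then applies to each approximant.

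On the left hand side of the resulting inequalities, the convergence $\|u_k\|_{L^r(\Omega)}\to\|u\|_{L^r(\Omega)}$ (resp.\ $L^\infty$ or the exponential integral) is immediate from uniform convergence. On the right, the integrand of $I_{p,q}(u_k;\Omega)$ converges pointwise to that of $I_{p,q}(u;\Omega)$ on $\Omega\setminus Z_u$ by the $C^2$ convergence there; the contribution of a $\delta$-neighborhood $N_\delta(Z_u)$ is shown to be small uniformly in $k$ by combining the Bochner-type estimate for the regularized $p$-Laplacian, which yields a uniform $L^2_{\mathrm{loc}}$ bound on $(|\nabla u_k|^2+1/k)^{(p-2)/4}|D^2 u_k|$, with the factor $|\nabla v|^p$ built into $|H_v|^q|\nabla v|^p$ and into $|\nabla_{T,v}|\nabla v|^{p/q}|^q$. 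The restriction $1\le q\le 2$ enters exactly at this Hölder step: the Bochner bound is naturally of $I_{p,2}$ type, and one extracts control on $I_{p,q}$ from it only in the range $q\le 2$.

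The main obstacle is precisely this uniform control of $I_{p,q}(u_k;\Omega)$ on neighborhoods of $Z_u$; everything else is bookkeeping. Finally, for the second statement I would apply the first part to $v=u-s$ on the open set $\Omega_s=\{u>s\}$: for a.e.\ $s$ the hypersurface $\{u=s\}$ is smooth (so $\Omega_s$ is a smooth bounded domain), $u-s$ vanishes on $\partial\Omega_s\cap\Omega$ and, extended by zero outside $\Omega_s$, satisfies \eqref{prob} on $\Omega_s$ with the same regularity as $u$; a density argument in $s$ then removes the restriction to regular values.
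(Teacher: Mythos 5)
Your overall strategy (regularize the degenerate equation, control the second--order terms by a weighted Hessian estimate, pass to the limit in Theorem~\ref{Theorem:Sobolev}) is the same as the paper's, which relies on the estimate \eqref{sciunzi} of Damascelli--Sciunzi \cite{DS} and on the approximation lemma of Canino--Le--Sciunzi \cite{CLS}. But there is a genuine gap exactly at the point you yourself call ``the main obstacle''. To conclude $\|u\|_{L^r}\leq C\,I_{p,q}(u;\Omega)$ from the inequalities for the approximants $u_k$ you need $\limsup_k I_{p,q}(u_k;\Omega)\leq I_{p,q}(u;\Omega)$ (or at least convergence of the weighted Hessian integrals), and a \emph{uniform bound} on $\int(\varepsilon^2+|\nabla u_k|^2)^{(p-2)/2}|D^2u_k|^2$ does not give this: the bounded sequence could concentrate mass on $Z_u$ in the limit, and weak $L^2$ compactness only yields the inequality in the wrong direction ($\liminf\geq$). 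Your H\"older remedy does rescue the subcritical range: writing the integrand as $\bigl[|D^2u_k|^2(\varepsilon^2+|\nabla u_k|^2)^{\frac{p-2}{2}}\bigr]^{q/2}(\varepsilon^2+|\nabla u_k|^2)^{\frac{p(2-q)}{4}}$, the second factor is uniformly small on a small neighborhood of $Z_u$ (since $\nabla u=0$ there and $u_k\to u$ in $C^1$), so for $1\leq q<2$ the contribution of $N_\delta(Z_u)$ is indeed small uniformly in $k$. For $q=2$, however, that factor disappears and the argument gives nothing beyond boundedness --- and $q=2$ is precisely the case needed for the applications (Theorems~\ref{Theorem} and \ref{Theorem2}). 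This is why the paper invokes the Canino--Le--Sciunzi lemma, whose essential content is not the uniform bound but the \emph{convergence} \eqref{limve} of the regularized weighted Hessian integrals to $\int|D^2u|^q|\nabla u|^{p-q}dx$; combined with the pointwise bound \eqref{hsdequadrove} and a.e.\ convergence, a generalized dominated convergence argument then passes $I_{p,q}$ to the limit. Without \eqref{limve} (or some equi-integrability near $Z_u$) your proof does not close at $q=2$.

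Two further, more minor points. First, the finiteness of $I_{p,q}(u;\Omega)$ itself is not free: the paper gets it from the pointwise identities \eqref{exp1}--\eqref{exp2} and the $\int_\Omega|D^2u|^q|\nabla u|^{p-q}dx<\infty$ estimate of \cite{DS}; you should state that you are quoting this (your ``Bochner-type'' bound is essentially it, but it must hold uniformly for your specific global regularization with fixed right-hand side $g(u)$, which is only $C^{1,\alpha}$ --- note also that then $u_k\notin C^\infty$, so some smoothing of the data, as in \cite{CLS}, is needed before Theorem~\ref{Theorem:Sobolev} applies). Second, for $v=u-s$ you invoke Sard's theorem for $u$ itself to say that a.e.\ $s$ is a regular value; $u$ is only $C^{1,\alpha}$ across $Z_u$, and Morse--Sard is not available at that regularity (the critical values are $u(Z_u)$, which for merely $C^1$ functions can have positive measure), so density of regular values of $u$ is unjustified. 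The paper sidesteps this by choosing $s_n\to s$ that are regular values of the \emph{smooth} approximants (Sard applies to them), using $\chi_{\{u_n>s_n\}}\to\chi_{\{u>s\}}$ and dominated convergence; your argument should be amended in the same way.
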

\begin{proof}
Let $Z_u=\{x\in\Omega : \nabla u(x)=0\}$. Recall that by standard elliptic regularity $u \in C^\infty (\Omega
\setminus Z_u)$ and that $|Z_u| = 0$ by \cite{DS}. Therefore, $u$ is
smooth almost everywhere in $\Omega$. Let $x \in \Omega
\setminus Z_u$ and observe that for the mean curvature $H_u$ of the
level set passing through $x$ we have the following explicit
expression
\begin{equation}\label{exp1}
-(n-1) H_u = {\rm div} \left(\frac{\nabla u}{|\nabla u|}\right) =
\frac{\Delta u}{|\nabla u|} - \frac{\langle D^2 u \nabla u, \nabla u
\rangle }{|\nabla u|^3}
\end{equation}
whereas for the tangential gradient term we have
\begin{equation}\label{exp2}
\nabla_{T,u} |\nabla u| = \frac{D^2 u \nabla u}{|\nabla u|} -
\frac{\langle D^2 u \nabla u, \nabla u \rangle \nabla u }{|\nabla u|^3},
\end{equation}
where all the terms in these expressions are evaluated at $x$. Hence,
there exists a positive constant $C = C (n,p,q)$ such that
\begin{equation}\label{hsdequadro}
\left(\frac{1}{p'}|\nabla_{T,u} |\nabla u|^{\frac{p}{q}}|\right)^{q} +
|H_u|^q |\nabla u|^{p} \leq C |D^2 u|^{q} |\nabla u|^{p - q}\quad
\textrm{for a.e. }x\in\Omega.
\end{equation}
{F}rom \cite{DS} we recall the following important estimate: for any
$1 \leq q \leq 2$ there holds
\begin{equation}\label{sciunzi}
\int_\Omega |D^2 u|^{q} |\nabla u|^{p - q} \, dx < \infty.
\end{equation}
Thanks to \eqref{hsdequadro} and \eqref{sciunzi}, all of the
integrals in the geometric Hardy-Sobolev inequalities are well
defined for any $1 \leq q \leq 2$.

However, since the solution $u$ is not smooth around $Z_u$, we need to
regularize $u$ in a neighborhood of the critical set in order to apply the
inequalities of Theorem {\rm\ref{Theorem:Sobolev}}. We will now
describe an approximation argument due to Canino, Le, and Sciunzi~\cite{CLS} for
the $p(\cdot)$-Laplacian (in our case $p(x) \equiv p$ constant).

\begin{lem}[\cite{CLS}]
Let $D\subset \Omega$ be an open set, $1\leq q\leq 2$, and $\varepsilon\in(0,1)$.
Let $u\in C^1(\overline{\Omega})$ be a solution of \eqref{problem} and
$h:=g(u)$.
If $h_\varepsilon\in C^\infty(\overline{D})$ is any sequence converging to $h$
in $C^1(\overline{D})$ as $\varepsilon\downarrow 0$, then the unique solution
$v_\varepsilon$ of the following regularized problem
\begin{equation}\label{aprox:problem}
\left\{
\begin{array}{rcll}
-{\rm div} \left( (\varepsilon^2 + |\nabla v_\varepsilon|^2)^{\frac{p-2}{2}} \nabla v_\varepsilon \right)
&=& h_\varepsilon(x)  &\textrm{in } D, \\
 v_\varepsilon &=& u  &\textrm{on } \partial D.
\end{array}
\right.
\end{equation}
tends to $u$ strongly in $W^{1,p}(B)$. Moreover, there exists a constant
$C$ independent of $\varepsilon$ such that
$$
\int_D |D^2 v_\varepsilon|^{q} ( \varepsilon^2 + |\nabla
v_\varepsilon|^2)^{\frac{p-q}{2}} \, dx \leq C
$$
and
\begin{equation}\label{limve}
\lim_{\varepsilon \to 0} \int_D |D^2 v_\varepsilon|^{q} (
\varepsilon^2 + |\nabla v_\varepsilon|^2)^{\frac{p-q}{2}} \, dx =
\int_D |D^2 u|^{q} |\nabla u|^{p - q} \, dx.
\end{equation}
\end{lem}

Let $v_\varepsilon\in C^{\infty} (D)$ be the unique solution of
\eqref{aprox:problem} and let us consider a smooth cut-off function
$\eta$ with compact support contained in $\Omega$ and such that
$\eta \equiv 1$ on $D$. We can construct a smooth regularization
$u_\varepsilon$ of $u$ defining $u_\varepsilon := (1-\eta) u + \eta
v_\varepsilon$. We can then apply Theorem \ref{Theorem:Sobolev} to
any $u_\varepsilon$ to get the appropriate inequality $(a)$, $(b)$,
or $(c)$. From \cite{DB,Lie} and standard elliptic regularity we
know that the regularization $u_\varepsilon$ will converge to $u$,
as $\varepsilon \downarrow 0$, both in $C^1 (\overline{\Omega})$ and
$C^2(\overline{\Omega} \setminus Z_u)$. Hence we can easily pass to
the limit as $\varepsilon \downarrow 0$ in the left hand side of
\eqref{Morrey} and \eqref{Sobolev}.

In order to see that also the remaining terms $I_{p,q}
(u_\varepsilon; \Omega)$ which involve tangential gradient and mean
curvature behave well under this approximation the argument is the
following. Splitting the domain $\Omega$ and recalling that
$u_\varepsilon \equiv v_\varepsilon$ in $D$ we have that: $$I_{p,q}
(u_\varepsilon;\Omega) = I_{p,q} (u_\varepsilon; D) + I_{p,q}
(u_\varepsilon; \Omega \setminus D) = I_{p,q} (v_\varepsilon; D) +
I_{p,q} (u_\varepsilon; \Omega \setminus D).$$ Clearly, from the
$C^2$ convergence we have that $I_{p,q} (u_\varepsilon; \Omega
\setminus D) \to I_{p,q} (u; \Omega \setminus D)$ as $\varepsilon
\downarrow 0$. Therefore we can concentrate on the convergence of
$I_{p,q}(v_\varepsilon;D)$.

{F}rom \eqref{exp1}, \eqref{exp2}, and through a simple expansion of
$(\varepsilon^2 + |\nabla v_\varepsilon|^2)^{\frac{p - q}{2}}$
around $\varepsilon =0$, we see that for a sufficiently small
$\varepsilon_0 > 0$ there exists a constant $K = K (n,p,q,
\varepsilon_0) > 0$ such that for any $\varepsilon \leq
\varepsilon_0$ we have

\begin{equation}\label{hsdequadrove}
\left(\frac{1}{p'} |\nabla_{T,v_\varepsilon}|\nabla v_\varepsilon|^{\frac{p}{q}}|
\right)^{q} + |H_{v_\varepsilon}|^q |\nabla v_\varepsilon|^{p} \leq
K \, |D^2 v_\varepsilon|^{q} (\varepsilon^2 + |\nabla
v_\varepsilon|^2)^{\frac{p - q}{2}}.
\end{equation}

Moreover, by the fact that $v_\varepsilon \to u$ in $C^2 (D
\setminus Z_u)$ and $|Z_u|=0$, almost everywhere in $D$ we have

\begin{equation}\label{aeve}
\lim_{\varepsilon \to 0}\left(\frac{1}{p'} |\nabla_{T,v_\varepsilon} |\nabla
v_\varepsilon|^{\frac{p}{q}}| \right)^{q} + |H_{v_\varepsilon}|^q
|\nabla v_\varepsilon|^{p}  = \left(\frac{1}{p'} |\nabla_{T,u} |\nabla
u|^{\frac{p}{q}}| \right)^{q} + |H_{u}|^q |\nabla u|^{p}.
\end{equation}

Now, thanks to \eqref{limve}, \eqref{hsdequadrove},
and \eqref{aeve}, by dominated convergence theorem we see that:
$$
\lim_{\varepsilon \to 0} \int_D \left(\frac{1}{p'} |\nabla_{T,v_\varepsilon} |\nabla
v_\varepsilon|^{\frac{p}{q}}| \right)^{q} + |H_{v_\varepsilon}|^q
|\nabla v_\varepsilon|^{p} \, dx 
$$
$$
= \int_D \left(\frac{1}{p'} |\nabla_{T,u} |\nabla
u|^{\frac{p}{q}}| \right)^{q} + |H_{u}|^q |\nabla u|^{p} \, dx.
$$
Thus, the assertions of Theorem~\ref{Theorem:Sobolev} hold for
$v=u$.

To conclude the proof let us fix any $s >0$ and consider $v = u-s$
on $\Omega_s = \{x\in\Omega : u > s \}$. It is clear that the integrands in the
inequalities remain unchanged in this case, so the only problem
comes from the fact $\Omega_s$ might not be smooth. If this is the
case, let us consider two sequences $\varepsilon_n \to 0$ and $s_n
\to s$, with the corresponding regularizations of $v$ given by $v_n
:= v_{\varepsilon_n} = u_{\varepsilon_n} - s_n$. Thanks to the
smoothness of any $v_n$ and Sard Lemma, we can choose each $s_n$ as
a regular value of $v_n$, so that the level set $\{ v_n > 0 \} = \{
u_n > s_n \}$ is smooth. Moreover, from the $C^1$ convergence, it is
clear that for the characteristic functions we have $\chi_{\{ u_n >
s_n \}} \to \chi_{\{ u > s \}}$. Hence we can conclude the proof
using the same dominated convergence argument as above.\end{proof}

\section{Regularity of stable solutions. Proof of Theorems \ref{Theorem}
and \ref{Theorem2}}\label{section5}

We are now ready to establish $L^r$ and $W^{1,r}$ \textit{a priori}
estimates of semi-stable solutions to $p$-Laplace equations proving
Theorems~\ref{Theorem}~ and~\ref{Theorem2}.

Before the proof our regularity results let us recall some known
facts on the linearized operator associated to \eqref{problem} and
semi-stable solutions.

\subsection{Linearized operator and semi-stable solutions}\label{section4}

This subsection deals with the linearized operator at any regular semi-stable
solution $u \in C_{0}^{1} (\overline{\Omega})$ of
\begin{equation}\label{prob2}
\left\{
\begin{array}{rcll}
-\Delta_p u &=& g(u)  &\textrm{in } \Omega, \\
 u&>& 0  &\textrm{in } \Omega, \\
 u &=& 0  &\textrm{on } \partial \Omega,
\end{array}
\right.
\end{equation}
where $\Omega$ is a bounded smooth domain in $\mathbb{R}^n$,
with $n \geq 2$, and $g$ is any positive $C^1$ nonlinearity.

The linearized operator $L_u$ associated to \eqref{prob2} at $u$ is defined
by duality as
$$
\begin{array}{l}
L_u(v,\phi):=\displaystyle \hspace{-0.2cm}\int_\Omega |\nabla u|^{p-2}\left\{\nabla v\cdot\nabla\phi
+(p-2)\left(\nabla v\cdot\frac{\nabla u}{|\nabla u|}\right)
\left(\nabla\phi\cdot\frac{\nabla u}{|\nabla u|}\right)\right\} dx\\
\displaystyle \hspace{2cm}- \int_\Omega g'(u)v \phi\ dx
\end{array}
$$
for all $(v,\phi)\in H_0\times H_0$, where the Hilbert space $H_0$ is defined
according to \cite{DS} as follows.

\begin{defin}\label{H0}
Let $u \in C_{0}^{1} (\overline{\Omega})$ be a regular semi-stable solution
of \eqref{prob2}.
We introduce the following weighted $L^2$-norm of the gradient
$$
|\phi|:=\left(\int_\Omega \rho |\nabla \phi|^2\ dx\right)^{1/2}
\quad \textrm{where }\rho:=|\nabla u|^{p-2}.
$$
According to \cite{DS}, the space
$$
H^1_\rho(\Omega):=\{\phi \in L^2(\Omega) \hbox{ weakly differentiable}:\:|\phi|<+\infty\}
$$
is a Hilbert space and is the completion of $C^\infty(\Omega)$ with respect to the
$|\cdot|$-norm.

We define the Hilbert space $H_0$ of admissible test functions as
$$
H_0:=
\left\{
\begin{array}{lll}
\{\phi \in H_0^1(\Omega):\: |\phi|<+\infty \}&\textrm{if}&1<p\leq 2\\
\\
\textrm{the closure of }C_0^\infty(\Omega) \textrm{ in }H^1_\rho(\Omega)&\textrm{if}&p>2.
\end{array}
\right.
$$
\end{defin}

Note that for $1<p\leq 2$, $H_0$ is a subspace of $H_0^1(\Omega)$ and since
$$
\int_\Omega |\nabla \phi|^2 \leq \|\nabla u\|_{L^\infty(\Omega)}^{2-p}|\phi|^2,
$$
we see that $(H_0,|\cdot|)$ is a Hilbert space. For $p>2$, the
weight $\rho=|\nabla u|^{p-2}$ is in $L^\infty(\Omega)$ and
satisfies $\rho^{-1} \in L^1(\Omega)$, as shown in \cite{DS}.

Now, thanks to the above definition, the operator $L_u$ is well defined
for $\phi \in H_0$ and, therefore, the semistability of the solution $u$ reads as
\begin{equation}\label{semistab}
L_u (\phi,\phi) =  \int_\Omega |\nabla u|^{p-2} \left\{|\nabla \phi|^2
+(p-2)\left(\nabla \phi\cdot\frac{\nabla u}{|\nabla u|}\right)^2\right\}
- g'(u) \phi^2\ dx \geq 0,
\end{equation}
for every $\phi \in H_0$.

On the one hand, considering $\phi = |\nabla u| \eta$ as a test function
in the semistability condition \eqref{semistab} for $u$, we obtain
\begin{equation}\label{StZu}
\int_{\Omega}\left[ (p-1) |\nabla u|^{p-2} |\nabla_{T,u} |\nabla u||^{2}
+ B_u^2 |\nabla u|^{p} \right] \eta^2 \, dx
\leq (p-1) \int_{\Omega}  |\nabla u|^{p} |\nabla \eta|^2 \, dx
\end{equation}
for any Lipschitz continuous function $\eta$ with compact support.
Here, $B_u^2$ denotes the $L^2$-norm of the second fundamental form
of the level set of $|u|$ through $x$ (\textit{i.e.}, the sum of the
squares of its principal curvatures). The fact that $\phi=\eta
|\nabla u|$ is an admissible test function derives from the estimate
\eqref{sciunzi}, whereas the computations behind \eqref{StZu} are
done in \cite{FSV} (see Theorem~2.5 \cite{FSV}).

On the other hand, noting that $(n-1) H_u^{2} \leq B_u^2$ and
$$
|\nabla u|^{p-2} |\nabla_{T,u} |\nabla u||^{2} = \frac{4}{p^2} |\nabla_{T,u} |\nabla u|^{\frac{p}{2}}|^{2},
$$
we obtain the key inequality to prove our regularity results for
semi-stable solutions
\begin{equation}\label{StZu2}
\int_{\Omega}\left( \frac{4}{p^2}|\nabla_{T,u} |\nabla u|^{p/2}|^{2}
+ \frac{n-1}{p-1}H_u^2 |\nabla u|^{p} \right) \eta^2 \, dx
\leq \int_{\Omega}  |\nabla u|^{p} |\nabla \eta|^2 \, dx
\end{equation}
for any Lipschitz continuous function $\eta$ with compact support.

\subsection{\textit{A priori} estimates of stable solutions.
Proof of Theorem~\ref{Theorem}}\label{subsection5:1}

In order to prove the gradient estimate \eqref{grad:estimate}
established in Theorem~{\rm \ref{Theorem}}~(b) we will use the
following result. Its proof is based on a technique introduced by
B\'enilan \textit{et al.} \cite{BBGGPV95} to obtain the regularity
of entropy solutions for $p$-Laplace equations with $L^1$ data.

\begin{proposition}\label{Prop:bootstrap}
Assume $n\geq 3$ and $h\in L^1(\Omega)$. Let $u$ be the entropy solution of
\begin{equation}\label{linear}
\left\{
\begin{array}{rcll}
-\Delta_p u&=&h(x)&\textrm{in }\Omega,\\
u&=&0&\textrm{on }\partial \Omega.
\end{array}
\right.
\end{equation}
Let $r_0\geq (p-1)n/(n-p)$. If $\int_\Omega |u|^{r_0}\ dx<+\infty$, then the
following \textit{a priori} estimate holds:
$$
\int_\Omega |\nabla u|^r\ dx
\leq
r|\Omega|
+\left(\frac{r_1}{r}-1\right)^{-1}\left(\int_\Omega |u|^{r_0}\ dx+\|h\|_{L^1(\Omega)}\right)
$$
for all $r< r_1:=pr_0/(r_0+1)$.
\end{proposition}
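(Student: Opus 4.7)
The proof follows the classical distribution-function technique of B\'enilan \emph{et al.}\ \cite{BBGGPV95}. My plan is to first extract from the entropy formulation the single energy estimate
\[
\int_{\{|u|\leq k\}}|\nabla u|^{p}\,dx \;\le\; k\,\|h\|_{L^{1}(\Omega)}\qquad\textrm{for every }k>0,
\]
which follows by testing the entropy inequality with the truncation $T_k(u)\in W^{1,p}_0(\Omega)\cap L^{\infty}(\Omega)$. Together with the $L^{r_0}$ control of $u$ assumed in the statement, this is the only analytic input I will use; everything else is real-variable bookkeeping on the level sets of $|\nabla u|$.

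Next I would estimate the distribution function of $|\nabla u|$. For $t,k>0$ split
\[
\{|\nabla u|>t\}\subset\{|\nabla u|>t,\;|u|\leq k\}\cup\{|u|>k\}.
\]
Chebyshev applied to each set gives
\[
\bigl|\{|\nabla u|>t,|u|\leq k\}\bigr|\leq \frac{1}{t^{p}}\int_{\{|u|\leq k\}}|\nabla u|^{p}\,dx\leq \frac{k\,\|h\|_{L^{1}}}{t^{p}},\qquad
\bigl|\{|u|>k\}\bigr|\leq \frac{1}{k^{r_0}}\int_{\Omega}|u|^{r_0}\,dx.
\]
Now I would optimize in $k$: the two bounds balance at $k=t^{p/(r_0+1)}$, yielding
\[
\bigl|\{|\nabla u|>t\}\bigr|\leq \frac{1}{t^{r_1}}\left(\|h\|_{L^{1}(\Omega)}+\int_{\Omega}|u|^{r_0}\,dx\right),
\]
with $r_1=pr_0/(r_0+1)$ as in the statement. (The hypothesis $r_0\geq (p-1)n/(n-p)$ is what makes this nontrivial: it corresponds to $r_1\geq (p-1)n/(n-1)$, beyond the range already guaranteed by the B\'enilan--Boccardo regularity theory for $L^{1}$ data.)

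Finally I would integrate the distribution-function bound through the layer-cake formula
\[
\int_{\Omega}|\nabla u|^{r}\,dx \;=\; r\int_{0}^{\infty}t^{r-1}\,\bigl|\{|\nabla u|>t\}\bigr|\,dt,
\]
splitting at $t=1$. On $(0,1]$ I use the trivial bound $|\{|\nabla u|>t\}|\leq |\Omega|$, which gives a contribution $\leq |\Omega|\leq r|\Omega|$. On $(1,\infty)$ I insert the optimized estimate, obtaining
\[
r\int_{1}^{\infty}t^{r-1-r_1}\,dt\,\Bigl(\|h\|_{L^{1}}+\textstyle\int_{\Omega}|u|^{r_0}\,dx\Bigr)
=\frac{r}{r_1-r}\Bigl(\|h\|_{L^{1}}+\textstyle\int_{\Omega}|u|^{r_0}\,dx\Bigr),
\]
which converges exactly under the hypothesis $r<r_1$, with constant $r/(r_1-r)=(r_1/r-1)^{-1}$. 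Summing the two contributions gives the asserted inequality.

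The only genuine difficulty is justifying the energy estimate at the entropy-solution level, i.e.\ verifying that $T_k(u)$ is an admissible test function and that the right-hand side collapses to $k\|h\|_{L^1}$; this is standard and I would simply quote \cite{BBGGPV95}. Everything after that is elementary layer-cake and optimization, so I expect no further obstacle.
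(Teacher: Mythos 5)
Your proposal is correct and follows essentially the same argument as the paper: test with the truncation $T_k(u)$ to get $\int_{\{|u|\leq k\}}|\nabla u|^p\,dx\leq k\|h\|_{L^1(\Omega)}$, split the level set of $|\nabla u|$ according to whether $|u|\leq k$, apply Chebyshev to both pieces, balance with $k=t^{p/(r_0+1)}$ (the paper's substitution $t=s^{(r_0+1)/p}$), and finish with the layer-cake formula split at $t=1$. The constants and the role of the hypothesis $r<r_1$ come out exactly as in the paper, so no further comment is needed.
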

\begin{rem}
B\'enilan \textit{et al.} \cite{BBGGPV95} proved the existence and
uniqueness of entropy solutions to problem \eqref{linear}. Moreover,
they proved that $|\nabla u|^{p-1}\in L^r(\Omega)$ for all $1\leq
r<n/(n-1)$ and $|u|^{p-1}\in L^r(\Omega)$ for all $1\leq r<n/(n-p)$.
Proposition~\ref{Prop:bootstrap} establishes an improvement of the
previous gradient estimate knowing an \textit{a priori} estimate of
$\int_\Omega |u|^{r_0} dx$ for some $r_0>(p-1)n/(n-p)$.
\end{rem}

\begin{proof}[Proof of Proposition {\rm\ref{Prop:bootstrap}}]
Multiplying \eqref{linear} by $T_s u=\max\{-s,$ $\min\{s,u\}\}$ we obtain
$$
\int_{\{|u|\leq s\}}|\nabla u|^p\ dx=\int_\Omega h(x)T_su\ dx\leq s\|h\|_{L^1(\Omega)}.
$$
Let $t=s^{(r_0+1)/p}$. {F}rom the previous inequality, recalling that
$V(s)=|\{x\in\Omega:|u|>s\}|$, we deduce
$$
\begin{array}{lll}
\displaystyle s^{r_0}|\{|\nabla u|>t\}|&\leq&\displaystyle
s^{r_0}\int_{\{|\nabla u|>t\}\cap\{|u|\leq s\}}\left(\frac{|\nabla u|}{t}\right)^p dx
+s^{r_0}\int_{\{|u|>s\}}\ dx\\
\\
&\leq &\displaystyle\|h\|_{L^1(\Omega)} +s^{r_0}V(s)\quad\textrm{for a.e. }s>0.
\end{array}
$$
In particular
\begin{equation}\label{lalarito}
t^{\frac{pr_0}{r_0+1}}|\{|\nabla u|>t\}|
\leq
\|h\|_{L^1(\Omega)}+\sup_{\tau>0}\Big\{\tau^{r_0}V(\tau)\Big\}\
\quad\textrm{for a.e. }t>0.
\end{equation}

Moreover, since
$$
\tau^{r_0}V(\tau)
\leq
\tau^{r_0}\int_{\{|u|>\tau\}}\left(\frac{|u|}{\tau}\right)^{r_0}\ dx
\leq\int_\Omega|u|^{r_0}\ dx \quad\textrm{for a.e. }\tau>0,
$$
we have $\sup_{\tau>0}\Big\{\tau^{r_0}V(\tau)\Big\}\leq \int_\Omega|u|^{r_0}\ dx$.

Let $r< r_1:=pr_0/(r_0+1)$. From \eqref{lalarito} and the previous inequality,
we have
$$
\begin{array}{lll}
\displaystyle \int_\Omega |\nabla u|^r\ dx
&=&
\displaystyle r\int_0^\infty t^{r-1}|\{|\nabla u|>t\}|\ dt\\
&\leq&
\displaystyle
r|\Omega|+r\left(\int_\Omega|u|^{r_0}\ dx
+
\|h\|_{L^1(\Omega)}\right)
\int_1^\infty t^{r-1}t^{-\frac{p{r_0}}{{r_0}+1}}\ dt
\end{array}
$$
proving the proposition.
\end{proof}

Now, we have all the ingredients to prove the \textit{a priori} estimates
established in Theorem~\ref{Theorem} for semi-stable solutions. It will follow
from Theorem~\ref{Theorem:Sobolev} and Propositions~\ref{Prop:2} and \ref{Prop:bootstrap}
choosing adequate test functions in the semistability condition \eqref{StZu2}.

First, we prove Theorem \ref{Theorem} when $n\neq p+2$. We will take $\eta=T_s u
=\min\{s,u\}$ as a test function in \eqref{StZu2} and then, thanks to
Proposition~\ref{Prop:2}, we apply our Morrey and Sobolev inequalities
\eqref{Morrey} and \eqref{Sobolev} with $q=2$.

\begin{proof}[Proof of Theorem {\rm \ref{Theorem}} for $n\neq p+2$]
Assume $n\neq p+2$. Let $u\in C^1_0(\overline{\Omega})$ be a semi-stable solution
of \eqref{problem}. By taking $\eta=T_s u=\min\{s,u\}$ in the semistability condition
\eqref{StZu2} we obtain
$$
\int_{\{u>s\}}\left( \frac{4}{p^2}|\nabla_{T,u} |\nabla u|^{p/2}|^{2}
+ \frac{n-1}{p-1}H_u^2 |\nabla u|^{p} \right)\, dx
\leq \frac{1}{s^2}\int_{\{u<s\}}  |\nabla u|^{p+2}\, dx
$$
for a.e. $s>0$. In particular,
$$
\min\left(\frac{4}{(n-1)p},1\right) I_{p,2}(u-s;\{x\in\Omega:u>s\})^p
\leq
\frac{p-1}{(n-1)s^2}\int_{\{u<s\}}  |\nabla u|^{p+2}\, dx
$$
for a.e. $s>0$, where $I_{p,2}$ is the functional defined in
\eqref{Ipq} with $q=2$. By Proposition~\ref{Prop:2} we can apply
Theorem~\ref{Theorem:Sobolev} with $\Omega$ replaced by
$\{x\in\Omega: u > s\}$, $v=u-s$, and $q=2$. Then, the $L^r$
estimates established in parts (a) and (b) follow directly from the
Morrey and Sobolev type inequalities \eqref{Morrey} and
\eqref{Sobolev}.

Finally, the gradient estimate \eqref{grad:estimate} follows directly from
Proposition~\ref{Prop:bootstrap} with $r_0=np/(n-p-2)$.
\end{proof}

Now, we deal with the proof of Theorem {\rm \ref{Theorem}} $($a$)$ when $n=p+2$.
This critical case follows from Theorem \ref{ThmMS} and the semistability condition
\eqref{StZu2} with the test function $\eta=\eta(u)$ defined in \eqref{etaa} and
\eqref{psii} below.

\begin{proof}[Proof of Theorem {\rm \ref{Theorem}} when $n=p+2$]
Assume $n = p+2$ (and hence, $n>3$). Taking a Lipschitz function
$\eta = \eta (u)$ (to be chosen later) in \eqref{StZu} and using the coarea
formula we obtain
\begin{equation}\label{semi:n=p+2}
\begin{array}{l}
C\displaystyle \int_{0}^{\infty}  \int_{\{ u = t \}}
\left\{\left|\nabla_{T,u} |\nabla u|^\frac{p-1}{2}\right|^{2}
+ \left|H_u|\nabla u|^\frac{p-1}{2}\right|^2\right\}\ \eta(t)^2 \, d\sigma dt
\\
\displaystyle \hspace{3cm}
\leq
\int_{0}^{\infty}  \int_{\{ u = t \}}  |\nabla u|^{p+1}\ \dot{\eta}(t)^2  \, d\sigma dt,
\end{array}
\end{equation}
where $d\sigma$ denotes the area element in $\{u=t\}$ and $C$, here and in the rest
of the proof, is a constant depending only on $p$.

To apply the Sobolev inequality \eqref{MS} in the left hand side of the
previous inequality we need to make an approximation argument.
Consider the sequence $u_k$ of smooth regularizations of $u$ introduced in
the proof of Proposition \ref{Prop:2} and note that $\{u_k=t\}$ is a smooth
hypersurface for a.e. $t\geq0$.
Then, from the Sobolev inequality \eqref{MS} with $\phi = |\nabla u_k|^{\frac{p-1}{2}}$,
$q=2$, and $M=\{u_k=t\}$, and noting that
$$
(p-1) \frac{n-1}{n-3} = p+1\quad \textrm{ when }n = p+2,
$$
we obtain
\begin{equation}\label{semi:u_k}
\begin{array}{l}
\displaystyle
C \int_{0}^{\infty} \left(\int_{\{ u_k = t \}}
|\nabla u_k|^{p+1}\right)^\frac{n-3}{n-1}  \eta(t)^2 \, d\sigma \ dt
\\
\hspace{1.0cm} \leq
\displaystyle \int_{0}^{\infty} \int_{\{ u_k = t \}}
\left\{\left|\nabla_{T,u_k} |\nabla u_k|^\frac{p-1}{2}\right|^{2}
+ \left|H_{u_k}|\nabla u_k|^\frac{p-1}{2}\right|^2\right\}\ \eta(t)^2 \, d\sigma dt.
\end{array}
\end{equation}

Now, we will pass to the limit in the previous inequality. Note that,
if $\eta$ is bounded, through a dominated convergence argument as in
Proposition \ref{Prop:2} we have
$$
\begin{array}{l}
\displaystyle \lim_{k \to \infty} \int_{0}^{\infty} \int_{\{ u_k = t \}}
\left\{\left|\nabla_{T,u_k} |\nabla u_k|^\frac{p-1}{2}\right|^{2}
+ \left|H_{u_k}|\nabla u_k|^\frac{p-1}{2}\right|^2\right\}\ \eta(t)^2 \, d\sigma dt
\\
\displaystyle
\hspace{0.5cm}
=\int_{0}^{\infty} \int_{\{ u = t \}}
\left\{\left|\nabla_{T,u} |\nabla u|^\frac{p-1}{2}\right|^{2}
+ \left|H_u|\nabla u|^\frac{p-1}{2}\right|^2\right\}\ \eta(t)^2 \, d\sigma dt.
\end{array}
$$
Moreover, from the $C^1$ convergence of $u_k$ to $u$ we obtain
$$
\lim_{k \to \infty} \int_{0}^{\infty} \left(\int_{\{ u_k = t \}}
|\nabla u_k|^{p+1}\right)^\frac{n-3}{n-1}  \eta(t)^2\, d\sigma \ dt
=
\int_{0}^{\infty}\left(\int_{\{ u = t \}}
|\nabla u|^{p+1}\right)^\frac{n-3}{n-1}  \eta(t)^2\, d\sigma \ dt.
$$

Therefore, taking the limit as $k$ goes to infinity in \eqref{semi:u_k} and using
\eqref{semi:n=p+2}, we get
\begin{equation}\label{p+2}
C \int_{0}^{\infty}  \psi(t)^{\frac{n-3}{n-1}} \, \eta (t)^2 \, dt
\leq \int_{0}^{\infty} \psi(t) \, \dot{\eta} (t)^2 \, dt =
\int_{0}^{\infty} \int_{\{ u = t \}}  |\nabla u|^{p+1}   \,
d\sigma\, \dot{\eta} (t)^{2}\, dt,
\end{equation}
where
\begin{equation}\label{psii}
\psi(t) := \int_{\{ u = t \}}  |\nabla u|^{p+1} \, d\sigma.
\end{equation}

Now, let $\bar{M}:={\| u \|_{L^\infty(\Omega)}}$. Given $s > 0$, choose
\begin{equation}\label{etaa}
\eta(t)=\eta_s (t) :=\left\{
\begin{array}{lll}
\displaystyle t/s&\textrm{ if }& 0\leq t \leq s,\\
\displaystyle \exp \left( \frac{1}{\sqrt{2}} \int_{s}^{t}
\left(\frac{C  \psi(\tau)^{\frac{n-3}{n-1}}}{ \psi(\tau)}\right)^{\frac12}
\, d\tau \right)&\text{ if }& s < t \leq \bar{M}\\
\displaystyle \eta(M)&\textrm{ if }& t >\bar{M}.
\end{array}
\right.
\end{equation}
It is then clear that

\begin{equation*}
\int_{0}^{\bar{M}} \int_{\{ u = t \}}  |\nabla u|^{p+1}   \, d\sigma \, \dot{\eta}_{s} (t)^{2}\, dt
=
\frac{1}{s^2} \int_{\{ u \leq s \}}  |\nabla u|^{p+2} \, dx
+ \frac{C}{2} \int_{s}^{\bar{M}} \psi(t)^{\frac{n-3}{n-1}} \, \eta_{s} (t)^{2} \, dt.
\end{equation*}
Therefore, from \eqref{p+2} we obtain
\begin{equation}\label{p+2:bis}
\frac{C}{2} \int_{s}^{\bar{M}} \psi(t)^{\frac{n-3}{n-1}} \, \eta_{s} (t)^{2} \, dt
\leq
\frac{1}{s^2} \int_{\{ u \leq s \}}  |\nabla u|^{p+2} \, dx.
\end{equation}

Let us choose $\alpha = \frac{2}{n-2}$, $\beta = \frac{n-3}{(n-2)(n-1)}$, and $m = n-2$.
Note that $\alpha,\beta>0$, $m>1$, and $\beta m'=1/(n-1)$. Moreover, using the definition
of $\eta_s$ we have
\begin{equation}\label{asdf}
\frac{1}{\psi(t)^{\beta m'} \eta_{s}(t)^{\alpha m'}}
=
\sqrt{\frac{2}{C}}\frac{\dot{\eta}_{s}(t)}{\eta_{s}(t)^{\alpha m' + 1}}
\end{equation}
for all $t>s$. By \eqref{asdf}, H\"older inequality, and
\eqref{p+2:bis}, we see that
$$
\begin{array}{lll}
\displaystyle \bar{M} - s
&=&\displaystyle
\int_{s}^{\bar{M}} \frac{\psi(t)^{\beta}\eta_{s}(t)^{\alpha}}{\psi(t)^{\beta}\eta_{s}(t)^{\alpha}} \, dt\\
&\leq&\displaystyle
\left( \int_{s}^{\bar{M}} \psi(t)^{\beta m}\eta_{s}(t)^{\alpha m} \, dt \right)^{\frac{1}{m}}
\left( \int_{s}^{\bar{M}} \frac{dt}{\psi(t)^{\beta m'}\eta_{s}(t)^{\alpha m'}} \right)^{\frac{1}{m'}}\\
&\leq&\displaystyle
\left( \int_{s}^{\bar{M}} \psi(t)^{\frac{n-3}{n-1}}\eta_{s}(t)^{2} \, dt \right)^{\frac{1}{n-2}}
\left( \sqrt{\frac{2}{C}}\int_{s}^{\bar{M}} \frac{\dot{\eta}_{s} (t)}
{\eta_{s}(t)^{m'\alpha + 1}}  \, dt \right)^{\frac{n-3}{n-2}}\\
&\leq&\displaystyle
\left( \frac{2}{Cs^2} \int_{\{ u\leq s \}}  |\nabla u|^{p+2} \, dx \right)^{\frac{1}{n-2}}
\left(\sqrt{\frac{2}{C}}\frac{n-3}{2}\right)^{\frac{n-3}{n-2}}
\end{array}
$$
which is exactly \eqref{L-infinty} (note that $n-2=p$ and $\eta(\bar{M})\geq 1$).
\end{proof}

\subsection{Regularity of the extremal solution.
Proof of Theorem~\ref{Theorem2}}\label{subsection5:2}

In this subsection we will prove the \textit{a priori} estimates for minimal
and extremal solutions of $\Plambda$ stated in Theorem~\ref{Theorem2}.
Let us remark that in the proof of Theorem~\ref{Theorem2} we will assume
the nonlinearity $f$ to be smooth. However, if it is only $C^1$ we can
proceed with an approximation argument as in the proof of Theorem 1.2 in
\cite{Cabre09}.

The $W^{1,p}$-estimate established in Theorem~\ref{Theorem2} has as main
ingredient the following result.

\begin{lem}\label{lemma:Poho}
Let $f$ be an increasing positive $C^1$ function satisfying \eqref{p-superlinear}
and $\lambda\in(0,\lambda^\star)$.
Let $u=u_\lambda\in C^1_0(\overline{\Omega})$ be the minimal solution of $\Plambda$.
The following inequality holds:
\begin{equation}\label{Pohozaev}
\int_\Omega|\nabla u|^p\ dx
\leq
\left(\max_{x\in\overline{\Omega}}|x|\right)
\frac{1}{p'}\int_{\partial\Omega}|\nabla u|^p\ d\sigma.
\end{equation}
\end{lem}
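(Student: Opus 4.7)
The plan is to establish first the stronger inequality~\eqref{key:Poho} quoted in the introduction, namely
\[
\int_{\Omega}|\nabla u_\lambda|^p\,dx \leq \frac{1}{p'}\int_{\partial\Omega}|\nabla u_\lambda|^p\,x\cdot\nu\,d\sigma,
\]
and then to deduce \eqref{Pohozaev} from it at once via the pointwise bound $x\cdot\nu \leq |x| \leq \max_{\overline{\Omega}}|x|$ on $\partial\Omega$.

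First, I would derive the standard $p$-Laplacian Poho\v{z}aev identity by multiplying the equation $-\Delta_p u_\lambda = \lambda f(u_\lambda)$ by $x\cdot\nabla u_\lambda$ and integrating on $\Omega$. The reaction term becomes $\lambda\int_{\Omega} f(u_\lambda)(x\cdot\nabla u_\lambda)\,dx = \lambda\int_{\Omega} x\cdot\nabla F(u_\lambda)\,dx = -\lambda n\int_{\Omega} F(u_\lambda)\,dx$, where $F(t) := \int_0^t f(s)\,ds$ vanishes on $\partial\Omega$. For the principal part, the pointwise identity $|\nabla u|^{p-2}\nabla u \cdot \nabla(x\cdot\nabla u) = |\nabla u|^p + \tfrac{1}{p}\,x\cdot\nabla|\nabla u|^p$, combined with the fact that $\nabla u_\lambda$ is parallel to $\nu$ on $\partial\Omega$, yields
\[
\frac{n-p}{p}\int_{\Omega}|\nabla u_\lambda|^p\,dx + \frac{1}{p'}\int_{\partial\Omega}|\nabla u_\lambda|^p\,x\cdot\nu\,d\sigma = \lambda n\int_{\Omega} F(u_\lambda)\,dx.
\]

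The key step is to show that the energy $E(u_\lambda) := \tfrac{1}{p}\int_{\Omega}|\nabla u_\lambda|^p\,dx - \lambda\int_{\Omega}F(u_\lambda)\,dx$ is nonpositive along the minimal branch, which by a direct rearrangement is equivalent to $\tfrac{1}{p}\int_\Omega|\nabla u_\lambda|^p\,dx \leq \lambda\int_\Omega F(u_\lambda)\,dx$. The plan is to exploit the $C^1$-regularity of the map $\lambda\mapsto u_\lambda$ from $(0,\lambda^\star)$ into $C^1_0(\overline{\Omega})$, which follows from the implicit function theorem since the linearized operator $L_{u_\lambda}$ is coercive on $H_0$ for $\lambda<\lambda^\star$ (Definition~\ref{H0}). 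Differentiating in $\lambda$ and testing the weak form of the equation against $w_\lambda := du_\lambda/d\lambda$ makes two of the three resulting terms cancel, leaving $\tfrac{d}{d\lambda}E(u_\lambda) = -\int_{\Omega}F(u_\lambda)\,dx \leq 0$. Since the minimal branch is increasing in $\lambda$ and $u_\lambda\to 0$ in $C^1_0(\overline{\Omega})$ as $\lambda\to 0^+$ (standard elliptic estimates applied to the monotone limit), we have $E(u_\lambda)\to 0$, so integrating the differential inequality gives $E(u_\lambda)\leq 0$ throughout $(0,\lambda^\star)$.

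Combining the Poho\v{z}aev identity with the bound $\lambda n\int_\Omega F(u_\lambda)\,dx \geq \tfrac{n}{p}\int_\Omega|\nabla u_\lambda|^p\,dx$ then produces
\[
\frac{1}{p'}\int_{\partial\Omega}|\nabla u_\lambda|^p\,x\cdot\nu\,d\sigma = \lambda n\int_{\Omega}F(u_\lambda)\,dx - \frac{n-p}{p}\int_{\Omega}|\nabla u_\lambda|^p\,dx \geq \int_{\Omega}|\nabla u_\lambda|^p\,dx,
\]
which is \eqref{key:Poho}; the lemma then follows from $x\cdot\nu\leq|x|\leq\max_{\overline{\Omega}}|x|$ on $\partial\Omega$. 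The main obstacle I anticipate is the rigorous $C^1$ dependence of $u_\lambda$ on $\lambda$, especially for $1<p<2$ where the degeneracy on the set $\{\nabla u_\lambda = 0\}$ forces one to work in the weighted space $H^1_\rho$, together with the behavior of the branch as $\lambda\to 0^+$. A cleaner alternative that entirely avoids differentiating the branch is to invoke that $u_\lambda$ is an absolute minimum of $E$ on the convex set $\{v\in W^{1,p}_0(\Omega):0\leq v\leq u_{\lambda'}\}$ for any fixed $\lambda'\in(\lambda,\lambda^\star)$ (using that $u_{\lambda'}$ is a strict supersolution at parameter $\lambda$), so that $E(u_\lambda)\leq E(0)=0$ follows directly by comparison with the zero function in the cone.
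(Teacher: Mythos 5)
Your argument is correct and, in substance, it is the paper's: the paper derives the Poho\v{z}aev identity \eqref{Pohozaev0} by multiplying by $x\cdot\nabla u$ exactly as you do, and then concludes from $J(u_\lambda)\leq J(0)=0$, which it gets by quoting from \cite{CS07} that the minimal solution is an absolute minimizer of the energy in the convex set $\{v\in W^{1,p}_0(\Omega):0\leq v\leq u_\lambda\}$ — this is precisely your ``cleaner alternative'' (with the obstacle $u_{\lambda'}$, $\lambda'>\lambda$, in place of $u_\lambda$; either version works and both rest on the same comparison/minimality fact). The rearrangement $\int_\Omega|\nabla u|^p\,dx = nJ(u)+\frac{1}{p'}\int_{\partial\Omega}|\nabla u|^p\,x\cdot\nu\,d\sigma$ together with $x\cdot\nu\leq|x|$ is also how the paper finishes, so your route through \eqref{key:Poho} is the same computation. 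The one caveat: do not lean on your primary route via differentiation of the branch. For $p\neq 2$ the map $\lambda\mapsto u_\lambda$ is not known to be $C^1$ (or even differentiable) into $C^1_0(\overline{\Omega})$: the linearized operator is degenerate or singular on $\{\nabla u_\lambda=0\}$ and is only defined by duality on the weighted space $H_0$ of Definition~\ref{H0}, so the implicit function theorem argument you sketch is not available off the shelf; moreover $w_\lambda=du_\lambda/d\lambda$ would need to be an admissible test function in the weak formulation. You correctly flagged this obstacle, and the comparison/minimality argument (the paper's, and your alternative) bypasses it entirely, so that is the version to keep.
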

\begin{proof}
Let $G'(t)=g(t)=\lambda f(t)$. First, we note that
$$
x\cdot\nabla u\ g(u) =x\cdot\nabla G(u)={\rm
div}\Big(G(u)x\Big)-nG(u)
$$
and that almost everywhere on $\Omega$ we can evaluate
$$
\begin{array}{lll}
\displaystyle x\cdot\nabla u\ \Delta_p u
-{\rm div}\Big(x\cdot\nabla u\ |\nabla u|^{p-2}\nabla u\Big)
&=&\displaystyle
-|\nabla u|^{p-2}\nabla u\cdot\nabla(x\cdot\nabla u)\\
&=&
\displaystyle
 -|\nabla u|^{p}-\frac{1}{p}\nabla|\nabla u|^p\cdot x\\
&=&
\displaystyle
 \frac{n-p}{p}|\nabla u|^{p}-\frac{1}{p}{\rm div} \Big(|\nabla u|^p x\Big).
\end{array}
$$
As a consequence, multiplying $\Plambda$ by $x\cdot\nabla u$ and integrating on $\Omega$,
we have
\begin{equation}\label{Pohozaev0}
n\int_\Omega G(u)\ dx-\frac{n-p}{p}\int_\Omega |\nabla u|^p\ dx=
\frac{1}{p'}\int_{\partial\Omega}|\nabla u|^p\ x\cdot\nu\ d\sigma,
\end{equation}
where $\nu$ is the outward unit normal to $\Omega$.

Noting that $u$ is an absolute minimizer of the energy functional
$$
J(u)=\frac{1}{p}\int_\Omega|\nabla u|^p\ dx-\int_\Omega G(u)\ dx
$$
in the convex set $\{v\in W^{1,p}_0(\Omega):0\leq v\leq u\}$ (see \cite{CS07}),
we have that $J(u)\leq J(0)=0$. Therefore, from \eqref{Pohozaev0}
we obtain
$$
\int_\Omega|\nabla u|^p\ dx
=
n J(u)
+\frac{1}{p'}\int_{\partial\Omega}|\nabla u|^p\ x\cdot\nu\ d\sigma
\leq
\left(\max_{x\in\overline{\Omega}}|x|\right)
\frac{1}{p'}\int_{\partial\Omega}|\nabla u|^p\ d\sigma
$$
proving the lemma.
\end{proof}

Finally, we prove Theorem~\ref{Theorem2} (using the semistability
condition \eqref{StZu2} with an appropriate test function),
Theorem~\ref{Theorem:Sobolev}, and Lemma~\ref{lemma:Poho}.

\begin{proof}[Proof of Theorem {\rm \ref{Theorem2}}]
Let $u_\lambda$ be the minimal solution of $\Plambda$ for $\lambda\in(0,\lambda^\star)$.
{F}rom \cite{CS07} we know that minimal solutions are semi-stable. In particular, $u_\lambda$
satisfies the semistability condition \eqref{StZu2} for all $\lambda\in(0,\lambda^\star)$.

Assume that $\Omega$ is strictly convex. Let $\delta(x) := {\rm
dist}(x,\partial\Omega)$ be the distance to the boundary and
$\Omega_\varepsilon:=\{x\in\Omega:\delta (x)<\varepsilon\}$. By
Proposition~\ref{Prop:1} there exist positive constants
$\varepsilon$ and $\gamma$ such that for every $x_0\in
\Omega_\varepsilon$ there exists a set $I_{x_0}\subset \Omega$
satisfying $|I_{x_0}|>\gamma$ and
\begin{equation}\label{kkkkey}
u_\lambda(x_0)^{p-1}
\leq
u_\lambda(y)^{p-1}\quad\textrm{for all }y\in I_{x_0}.
\end{equation}
Let $x_\varepsilon\in\overline{\Omega}_\varepsilon$ be such that
$u_\lambda(x_\varepsilon)=\|u_\lambda\|_{L^\infty(\Omega_\varepsilon)}$.
Integrating with respect to $y$ in $I_{x_\varepsilon}$ inequality
\eqref{kkkkey} and using \eqref{p-superlinear}, we obtain
\begin{equation}\label{kkkkeyyyyy}
\|u_\lambda\|_{L^\infty(\Omega_\varepsilon)}^{p-1}
\leq
\frac{1}{\gamma}\int_{I_{x_\varepsilon}}u_\lambda^{p-1}\ dy
\leq
\frac{1}{\gamma}\int_{\Omega}u_\lambda^{p-1}\ dy
\leq \frac{C}{\gamma}\|f(u_\lambda)\|_{L^1(\Omega)},
\end{equation}
where $C$, here and in the rest of the proof, is a constant independent of $\lambda$. Letting
$s=\left(\frac{C}{\gamma}\|f(u_\lambda)\|_{L^1(\Omega)}\right)^{1/(p-1)}$,
we deduce
\begin{equation}\label{ghjklk}
\Omega_\varepsilon\subset\{x\in\Omega:u_\lambda(x) \leq s\}.
\end{equation}

Now, choose
$$
\eta (x) := \left\{
\begin{array}{lll}
\delta (x)&\textrm{if}&\delta (x) < \varepsilon,\\
\varepsilon&\textrm{if}&\delta (x) \geq \varepsilon,
\end{array}
\right.
$$
as a test function in \eqref{StZu2} and use \eqref{ghjklk} to obtain
$$
\varepsilon^2
\int_{\{u_\lambda>s\}}\left( \frac{4}{p^2}|\nabla_{T,u_\lambda} |\nabla u_\lambda|^{p/2}|^{2}
+ \frac{n-1}{p-1}H_{u_\lambda}^2 |\nabla u_\lambda|^{p} \right) \, dx
\leq \int_{\{u_\lambda \leq s\}}  |\nabla u_\lambda|^{p} \, dx.
$$
Multiplying equation $\Plambda$ by $T_su_\lambda=\min\{s,u_\lambda\}$ we have
\begin{equation}\label{umens}
\int_{\{u_\lambda<s\}}|\nabla u_\lambda|^p\ dx=\lambda\int_\Omega
f(u_\lambda)T_su\ dx \leq\lambda^\star
s\|f(u_\lambda)\|_{L^1(\Omega)}=C
\|f(u_\lambda)\|_{L^1(\Omega)}^{p'}.
\end{equation}
Combining the previous two inequalities we obtain
$$
\int_{\{u_\lambda>s\}}\left( \frac{4}{p^2}|\nabla_{T,u_\lambda} |\nabla u_\lambda|^{p/2}|^{2}
+ \frac{n-1}{p-1}H_{u_\lambda}^2 |\nabla u_\lambda|^{p} \right) \, dx
\leq
C \|f(u_\lambda)\|_{L^1(\Omega)}^{p'}.
$$
At this point, proceeding exactly as in the proof of Theorem~\ref{Theorem}, we conclude
the $L^r$ estimates established in parts $(a)$ and $(b)$.

In order to prove the $W^{1,p}$-estimate of part $(b)$, recall that
by \eqref{Pohozaev0} we have
$$
\int_\Omega|\nabla u_\lambda|^p\ dx \leq C
\int_{\partial\Omega}|\nabla u_\lambda|^p\ d\sigma. 
$$
Therefore, we need to control the right hand side of the previous inequality.
Since the nonlinearity $f$ is increasing by hypothesis we obtain
$$
f(u_\lambda)\leq f\left(C\|f(u_\lambda)\|_{L^1(\Omega)}^\frac{1}{p-1}\right)
\quad\textrm{in }\Omega_\varepsilon
$$
by \eqref{kkkkeyyyyy}, where $C$ is a constant independent of $\lambda$.

Now, since $-\Delta_p u_\lambda = \lambda f(u_\lambda)\in L^\infty(\Omega_\varepsilon)$ 
in $\Omega_\varepsilon$, it holds
$$
\| u_\lambda \|_{C^{1,\beta} (\overline{\Omega}_\varepsilon)} \leq
C'
$$
for some $\beta\in(0,1)$ by \cite{Lie}, where $C'$ is a constant depending only on 
$n$, $p$, $\Omega$, $f$, and $\|f(u_\lambda)\|_{L^1(\Omega)}$ proving the assertion. 

Finally, assume that $p\geq 2$ and \eqref{convex:assump} holds. From \cite{S}
we know that $f(u^\star)\in L^r(\Omega)$ for all $1\leq r<n/(n-p')$.
In particular, $f(u^\star)\in L^1(\Omega)$. Therefore, parts $(i)$
and $(ii)$ follow directly from $(a)$ and $(b)$.
\end{proof}

\bigskip
\footnotesize
\noindent\textit{Acknowledgments.}
The authors were supported by grant 2009SGR345(Catalunya) and
MTM2011-27739-C04 (Spain). The second author was also supported by
grant MTM2008-06349-C03-01 (Spain).

\end{document}